\DeclareMathOperator{\WRT}{WRT}
\numberwithin{equation}{section} 
\begin{document}

\title[WRT invariants and indefinite false theta functions]{Witten--Reshetikhin--Turaev invariants and indefinite false theta functions for plumbing indefinite H-graphs}
\author[Y. Murakami]{Yuya Murakami}
\address{Mathematical Inst. Tohoku Univ., 6-3, Aoba, Aramaki, Aoba-Ku, Sendai 980-8578, JAPAN}
\email{murakami.yuya.896@m.kyushu-u.ac.jp}

\date{\today}


\begin{abstract}
	Gukov--Pei--Putrov--Vafa conjectured the existence of $ q $-series whose radial limits are Witten--Reshetikhin--Turaev invariants and called them homological blocks.
	For weakly negative definite plumbed 3-manifolds, Gukov--Pei--Putrov--Vafa and Gukov-Manolescu constructed homological blocks.
	In this paper, we construct indefinite false theta functions which are candidates of homological blocks for some plumbed $ 3 $-manifolds which are not weakly negative definite.
	Moreover we prove that, for the Poincar\'{e} homology sphere, our indefinite false theta function coincides with the original homological block.
\end{abstract}


\maketitle
\tableofcontents


\section{Introduction} \label{sec:intro}


The Witten--Reshetikhin--Turaev (WRT) invariants are important quantum invariants of $ 3 $-manifolds constructed by Witten~\cite{Witten} and Reshetikhin--Turaev~\cite[Theorem 3.3.2]{Reshetikhin-Turaev} from physical and mathematical viewpoints respectively.
It is important to study the asymptotic expansions of the WRT invariants.
The Witten's asymptotic expansion conjecture claims that such expansions can be written by the Chern--Simons invariants and the Reidemeister torsions.
The first progress on this conjecture was made by Lawrence--Zagier~\cite{LZ}.
They proved the conjecture for the Poincar\'{e} homology sphere.
The key idea of their proof is constructing a false theta function whose radial limits coincide with the WRT invariants of the Poincar\'{e} homology sphere.
Then, we obtain the asymptotic expansion of WRT invariants by considering the modular transformation of such a false theta function.

Here we remark that such an argument leads us to the concept of quantum modular forms introduced by Zagier~\cite{Zagier_quantum}.
Although quantum modular forms are purely number theoretical objects, many examples arise from $ 3 $-dimensional topology.
The WRT invariants of the Poincar\'{e} homology sphere is a typical example of quantum modular forms.
Other examples are colored Jones polynomials.
For torus knots, their quantum modularity was studied by Hikami~\cite{Hikami_AGid}, Hikami--Kirillov~\cite{Hikami-Kirillov_torus,Hikami-Kirillov_L-function}, and Hikami--Lovejoy~\cite{Hikami-Lovejoy_QMF}.
Galoufalidis--Zagier~\cite{Garoufalidis-Zagier} studied their quantum modularity for hyperbolic knots.

We can conclude that the proof by Lawrence--Zagier~\cite{LZ} is based on a proof of the quantum modularity of the WRT invariants.
Thus, we can expect that we can prove the Witten's asymptotic expansion conjecture by proving the quantum modularity of the WRT invariants.
To prove it, we need to construct $ q $-series associated with $ 3 $-manifolds whose radial limits coincide with the WRT invariants.
Hikami~\cite{H_Bries,H_Seifert} constructed such $ q $-series and proved the Witten's conjecture for Brieskorn homology spheres and Seifert homology spheres, respectively. 
Fuji--Iwaki--Murakami--
Terashima~\cite{FIMT} also constructed such $ q $-series for Seifert homology spheres.
The most general construction of candidates for such $ q $-series is carried out by Gukov--Pei--Putrov--Vafa~\cite{GPPV} and Gukov--Manolescu~\cite{GM}.
They constructed $ q $-series invariants called homological blocks or GPPV invariants for plumbed manifolds with weakly negative definite linking matrices.
Andersen--Misteg{\aa}rd~\cite{Andersen-Mistegard} proved that homological blocks for Seifert homology spheres coincide with $ q $-series constructed by Fuji--Iwaki--Murakami--Terashima~\cite{FIMT}.
Many other authors have studied the Witten's asymptotic expansion conjecture~\cite{Andersen-Hansen,Andersen-Himpel,Andersen-Mistegard,Andersen,Andersen-Petersen,Beasley-Witten,Charles,Chun,Chung_Seifert,Chung_rational,Chung_resurgent,Chung_SU(N),Charles-Marche_I,Charles-Marche_II,FIMT,Freed-Gompf,GMP,H_Lattice,H_Lattice2,Hansen-Takata,Jeffrey,Rozansky1,Rozansky2,Wu}.

Many authors studied the modular transformation of homological blocks.
Matsusaka--Terashima~\cite{Matsusaka-Terashima} proved it for Seifert homology spheres.
Bringmann--Mahlburg--Milas~\cite{BMM_high_depth} proved it for non-Seifert homology spheres whose surgery diagrams are the H-graphs with negative definite linking matrices.
Murakami~\cite[Proposition 4.2]{M_plumbed} proved that homological blocks are false theta functions for plumbed homology spheres.
Bringmann--Nazaroglu~\cite{Bringmann-Nazaroglu} and Bringmann--Kaszian--Milas--Nazaroglu~\cite{BKMN_False_modular} clarified and proved the modular transformation formulas of false theta functions.

Gukov--Pei--Putrov--Vafa~\cite[Equation (A.28)]{GPPV} conjectured that radial limits of homological blocks are WRT invariants.
For Seifert homology spheres, this conjecture is confirmed by 
Andersen--Misteg{\aa}rd~\cite{Andersen-Mistegard} and 
Fuji--Iwaki--Murakami--Terashima~\cite{FIMT} with a result in
Andersen--Misteg{\aa}rd~\cite{Andersen-Mistegard} for Seifert homology spheres independently.
Mori--Murakami~\cite{MM} proved this conjecture for non-Seifert homology spheres whose surgery diagrams are the H-graphs with negative definite linking matrices.
Murakami~\cite{M_plumbed,M_GPPV} proved it for all negative definite plumbed manifolds.


All of these studies dealt with weakly definite plumbed manifolds.
In this paper, we deal with the simplest manifolds which are not such manifolds, that is, plumbed homology spheres whose surgery diagrams are the H-graphs with indefinite linking matrices.
For such manifolds, we construct indefinite false theta functions and prove that their radial limits coincide with the WRT invariants.
Here, indefinite false theta functions are ``false''  objects of indefinite theta functions in the same way that false theta functions are ``false''  objects of theta functions.
Indefinite theta functions are infinite series associated with indefinite quadratic forms. In contrast, theta functions are infinite series associated with positive definite quadratic forms.
Zwegers~\cite{Zwegers_thesis} introduced one of the most important class of indefinite theta functions, and he proved their modular transformation laws.
Thus, Zwegers' indefinite theta functions are the first candidates of our $ q $-series whose radial limits are WRT invariants.
However, it turns out that their radial limits vanish in this article.
That is the reason why we consider indefinite false theta functions.

Some authors studied indefinite false theta functions.
Andrews--Dyson--Hickerson~\cite{Andrews-Dyson-Hickerson} expressed Ramanujan's $ q $-hypergeometric functions $ \sigma(q) $ as an indefinite false theta function and proved that it is related to the real quadratic field $ \Q(\sqrt{6}) $.
Lovejoy--Osburn~\cite{Lovejoy-Osburn} considered a dozen $ q $-hypergeometric functions and proved they are related to real quadratic fields.
Recently, Bringmann--Nazaroglu~\cite{Bringmann-Nazaroglu_real_quad_double_sum} obtained the modular transformation law of Lovejoy--Osburn's $ q $-hypergeometric functions.

Let us explain our setting for our main result.
Let $ \Gamma $ be the H-graph with vertices $ 1, \dots, 6 $ and weights $ w_v \in \Z $ for each vertex $ v $ shown in \cref{fig:H-graph}.
Here we remark that the surgery diagram $ \calL(\Gamma) $ corresponding to $\Gamma$ is shown in \cref{fig:surgery_diagram} and its linking matrix is
\[
W = \pmat{w_1 & 1 & 1 & 1 & 0 & 0 \\
	1 & w_2 & 0 & 0 & 1 & 1 \\
	1 & 0 & w_3 & 0 & 0 & 0 \\
	1 & 0 & 0 & w_4 & 0 & 0 \\
	0 & 1 & 0 & 0 & w_5 & 0 \\
	0 & 1 & 0 & 0 & 0 & w_6 }.
\]
\begin{figure}[htb]
	\begin{minipage}[b]{0.45\linewidth}
		\centering
		\begin{tikzpicture}
			\node[shape=circle,fill=black, scale = 0.4] (1) at (0,0) { };
			\node[shape=circle,fill=black, scale = 0.4] (2) at (1.5,0) { };
			\node[shape=circle,fill=black, scale = 0.4] (3) at (-1,-1) { };
			\node[shape=circle,fill=black, scale = 0.4] (4) at (-1,1) { };
			\node[shape=circle,fill=black, scale = 0.4] (5) at (2.5,1) { };
			\node[shape=circle,fill=black, scale = 0.4] (6) at (2.5,-1) { };
			
			\node[draw=none] (B1) at (0,0.4) {$ w_1 $};
			\node[draw=none] (B2) at (1.5, 0.4) {$ w_2 $};
			\node[draw=none] (B3) at (-0.6,1) {$ w_3 $};
			\node[draw=none] (B4) at (-0.6,-1) {$ w_4 $};
			\node[draw=none] (B5) at (2.1,1) {$ w_5 $};		
			\node[draw=none] (B6) at (2.1,-1) {$ w_6 $};	
			
			\path [-](1) edge node[left] {} (2);
			\path [-](1) edge node[left] {} (3);
			\path [-](1) edge node[left] {} (4);
			\path [-](2) edge node[left] {} (5);
			\path [-](2) edge node[left] {} (6);
		\end{tikzpicture}
		\caption{The H-graph $ \Gamma $} \label{fig:H-graph}		
	\end{minipage}
	\begin{minipage}[b]{0.45\linewidth}
		\centering
		\begin{tikzpicture}[scale=0.5]
			\begin{knot}[
				clip width=5, 
				flip crossing=2, 
				flip crossing=4, 
				flip crossing=6, 
				flip crossing=7, 
				flip crossing=9
				]
				\strand[thick] (0, 0) circle [x radius=3cm, y radius=1.5cm];
				\strand[thick] (0, 0) +(4cm, 0pt) circle [x radius=3cm, y radius=1.5cm];
				\strand[thick] (0, 0) +(-2.5cm, 2cm) circle [radius=1.5cm];
				\strand[thick] (0, 0) +(-2.5cm, -2cm) circle [radius=1.5cm];
				\strand[thick] (0, 0) +(6.5cm, 2cm) circle [radius=1.5cm];
				\strand[thick] (0, 0) +(6.5cm, -2cm) circle [radius=1.5cm];
				
				\node (1) at (0, 2) {$w_{1}$};
				\node (2) at (4, 2) {$w_{2}$};
				\node (3) at (-2.5, 4) {$w_{3}$};
				\node (4) at (-2.5, -4) {$w_{4}$};
				\node (5) at (6.5, 4) {$w_{5}$};
				\node (6) at (6.5, -4) {$w_{6}$};
			\end{knot}
		\end{tikzpicture}
		\caption{The surgery diagram $ \calL(\Gamma) $ corresponding to $\Gamma$} \label{fig:surgery_diagram}
	\end{minipage}
\end{figure}
Let $ M(\Gamma) $ be the plumed 3-manifold obtained from $ S^3 $ through the surgery along the diagram $ \calL(\Gamma) $.
If $ w_i \in \{ \pm 1 \} $ for some $ 3 \le i \le 6 $, then $ M(\Gamma) $ is a Seifert manifold by Neumann's theorem (\cite[Proposition 2.2]{Neumann_Lecture}, \cite[Theorem 3.1]{Neumann_work}).
Let $ S \in \Sym_2(\Q) $ be the upper left $ 2 \times 2 $-submatrix of $ -W^{-1} $.

In this article, we assume that $ W $ has determinant is $ \pm 1 $ and $ S $ is indefinite.
In this case, $ M(\Gamma) $ is an integral homology sphere since $ \det W = \pm 1 $ and $ H_1(M(\Gamma), \Z) \cong \Z^{6} / W(\Z^{6}) $ by use of the Mayer--Vietoris sequence.
We give examples of such graphs in \cref{sec:examples} later. 

For a positive integer $ k $, let $ \WRT_k(M(\Gamma)) $ be the WRT invariant of $ M(\Gamma) $ normalised as $ \WRT_k(S^3) = 1 $. 
We also denote $ \zeta_k \coloneqq e^{2\pi\iu/k} $.

Under the above settings, our main result is as follows.

\begin{thm} \label{thm:main_indefinite}
	Let $ \Gamma $ be an H-graph such that $ W $ has determinant is $ \pm 1 $, $ S $ is indefinite, and $ w_3, w_4, w_5, w_6 \notin \{ \pm 1 \} $.
	Then, we can construct explicitly an indefinite false theta function $ \widehat{Z}_\Gamma \left( q \right) \in q^c \bbC[[q]] $ with some $ c \in \Q $ such that for any positive integer $ k $ it holds
	\[
	\WRT_k(M(\Gamma))
	= \frac{1}{2(\zeta_{2k} - \zeta_{2k}^{-1})}
	\lim_{q \to \zeta_k^{\varepsilon_\Gamma}} \widehat{Z}_\Gamma \left( q \right),
	\]
	where $ \varepsilon_\Gamma \in \{ \pm 1 \}$ is an explicit number determined by $ \Gamma $.
\end{thm}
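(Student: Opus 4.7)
The plan is as follows. First, I would start from the Reshetikhin--Turaev surgery formula expressing $\WRT_k(M(\Gamma))$ as a finite Gauss-type sum over $(n_1,\dots,n_6)\in(\Z/2k\Z)^6$ of $\exp\bigl(\tfrac{\pi\iu}{2k}\vec n^{\mathsf T}W\vec n\bigr)\prod_v(\zeta_{2k}^{n_v}-\zeta_{2k}^{-n_v})$ divided by the usual normalisation. Because $\Gamma$ is an H-graph, the leaves $3,4$ are coupled only to $n_1$ and the leaves $5,6$ only to $n_2$, so the partial sums over $n_3,n_4,n_5,n_6$ factorise into one-dimensional Gauss sums, coupled linearly to $n_1,n_2$. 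Under the hypothesis $w_3,w_4,w_5,w_6\notin\{\pm 1\}$, each such Gauss sum evaluates in closed form via standard quadratic reciprocity, producing a rational function of $\zeta_k$ times a quadratic exponential in $(n_1,n_2)$ whose coefficient matrix is precisely the indefinite block $S$ of $-W^{-1}$. This reduces the whole WRT invariant to an explicit finite two-dimensional sum against $S$.

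Motivated by the shape of this finite sum, I would then define
\[
\widehat{Z}_\Gamma(q) \;=\; \sum_{\vec n\in\Z^2}\varepsilon(\vec n)\,F(\vec n)\,q^{\tfrac{1}{2}Q(\vec n)},
\]
where $Q(\vec n)$ is the quadratic exponent dictated by $S$, $F(\vec n)$ is the polynomial prefactor produced by the leaf Gauss summations above, and $\varepsilon(\vec n)=\tfrac{1}{2}\bigl(\operatorname{sgn}(c_1\cdot\vec n)-\operatorname{sgn}(c_2\cdot\vec n)\bigr)$ is the Eichler-type sign weight attached to the two null directions $c_1,c_2$ of the indefinite form $S$. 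This sign weight confines the support of the sum to the positive cone of $S$, guaranteeing $\widehat{Z}_\Gamma(q)\in q^c\bbC[[q]]$ for some $c\in\Q$, and makes $\widehat{Z}_\Gamma$ a \emph{false} (rather than completed) indefinite theta function.

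The final step is to compute $\lim_{q\to\zeta_k}\widehat{Z}_\Gamma(q)$. Setting $q=\zeta_k e^{-t/k}$ with $t\downarrow 0$ and decomposing $\Z^2$ into residue classes modulo $2kW$, the function $\varepsilon(\vec n)F(\vec n)$ becomes eventually constant on each class, so the class sum becomes a two-dimensional Gaussian sum along the cone. Applying the Euler--Maclaurin summation formula in two variables on each class produces an integral bulk term plus explicit boundary contributions along the null rays of $S$. After summing over classes, the bulk Gaussian integrals cancel by a Gauss-sum identity that uses $\det W=\pm 1$, while the boundary contributions reassemble into precisely the finite sum obtained in Step~1, divided by $2(\zeta_{2k}-\zeta_{2k}^{-1})$.

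The main obstacle is exactly this bulk-cancellation step. In the negative-definite analyses of~\cite{MM,M_plumbed} the Gaussian has honest exponential decay and the Euler--Maclaurin bulk term is benign; here the Gaussian is oscillatory along the null cone and, as the introduction recalls, Zwegers' error-function completion yields an \emph{indefinite theta function with vanishing radial limits}, meaning the bulk contribution carries the entire answer on the completed side. The challenge is to show that in the false variant the balance is reversed: the bulk integrals cancel after summing over residue classes, and the entire limit is supplied by the boundary terms along the null rays, matching the finite Gauss sum of Step~1. The hypotheses $w_3,w_4,w_5,w_6\notin\{\pm 1\}$ ensure the leaf Gauss sums deliver a prefactor $F$ of the exact shape required for this cancellation, and $\det W=\pm 1$ aligns the lattice of residue classes with the level $k$.
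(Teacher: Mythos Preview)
Your overall architecture---reduce the surgery formula by leaf Gauss sums to a two-variable weighted Gauss sum for $S$, build a $q$-series whose Euler--Maclaurin radial limit reproduces that finite sum---matches the paper. The genuine gap is in the choice of the sign weight $\varepsilon(\vec n)$.

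You propose $\varepsilon(\vec n)=\tfrac{1}{2}\bigl(\operatorname{sgn}(c_1\cdot\vec n)-\operatorname{sgn}(c_2\cdot\vec n)\bigr)$, the standard Zwegers weight. But this is precisely the weight whose radial limit the paper shows \emph{vanishes} (Remark~6.4): after diagonalising via ${}^t(\lambda,\lambda')S(\widetilde\lambda,\widetilde\lambda')=\operatorname{diag}(M',-N')$, the Zwegers weight becomes $\sgn_0(m)+\sgn_0(n)$, which equals $+2$ on the cone $\{m,n>0\}$ and $-2$ on $\{m,n<0\}$. Applying your Euler--Maclaurin argument to each cone, both produce the same weighted Gauss sum (Proposition~5.1 and the base-change identity, Proposition~2.4), and the two contributions cancel. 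So with your $\varepsilon$ the limit is $0$, not $\WRT_k$. Your last paragraph attributes the vanishing to Zwegers' error-function \emph{completion}, but the phenomenon already occurs for the holomorphic sign-function version you wrote down; ``false'' in the sense of ``signs instead of error functions'' is not enough here.

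What the paper actually uses is the weight $\sgn_0(m)\bigl(\sgn_0(m)+\sgn_0(n)\bigr)$, i.e.\ the Zwegers weight multiplied by one further sign. This makes both cones contribute with the \emph{same} sign, so the Euler--Maclaurin boundary terms add to $4\operatorname{perm}(\widetilde\lambda,\widetilde\lambda')/k^2$ times the finite Gauss sum rather than cancelling. Two minor points: the vectors $\lambda,\lambda'$ are chosen with $Q(\lambda),Q(\lambda')<0$, not null; and the ``polynomial prefactor $F$'' in your Step~2 is not quite what emerges---the leaf reduction produces a finite sum over a set $\calS\subset\tfrac{1}{2M}\Z\times\tfrac{1}{2N}\Z$ of shifts with $\pm 1$ weights $\varepsilon(\gamma)$, and the vanishing identities $\sum_\alpha\chi(\alpha)=\sum_\alpha\chi(\alpha)\alpha=0$ (Remark~3.3) are what make the Euler--Maclaurin expansion collapse to the single $mn$ term.
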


We will give pricise definitions of $ \varepsilon_\Gamma $ and $ \widehat{Z}_{\Gamma} (q) $ in \cref{sec:fund_data,sec:infinite_series} respectively.

Our method also can apply to homology spheres which have Y-shaped surgery diagrams since they are obtained also by H-graphs.
For example, in \cref{sec:examples} we will calculate the indefinite false theta function $ \widehat{Z}_\Gamma \left( q \right) $ for the Poincar\'{e} homology sphere and prove that it coincides with the homological block of the Poincar\'{e} homology sphere defined by Gukov--Pei--Putrov--Vafa~\cite{GPPV}.

This paper will be organised as follows. 
In \cref{sec:Gauss_sum}, we give properties of Gauss sums which we need to calculate WRT invariants.
In \cref{sec:fund_data}, we prepare some notations for plumbing H-graphs $ \Gamma $, which we use throughout this paper.
In \cref{sec:WRT}, we write WRT invariants as weighted Gauss sums, which are suitable to represent as radial limits of infinite series.
In \cref{sec:asymptotic_formula}, we prove an asymptotic formula which we can apply for indefinite theta functions and indefinite false theta functions.
In \cref{sec:Zwegers_theta}, we prove the vanishings of radial limits of Zwegers' indefinite theta functions.
In \cref{sec:infinite_series}, we introduce indefinite false theta functions $ \widehat{Z}_{\Gamma} (q) $ and prove our main theorem. 
Finally, in \cref{sec:examples}, we give examples of our H-graphs and an indefinite false theta function whose radial limits are WRT invariants of the Poincar\'{e} homology sphere.


\subsection*{Notations} \label{subsec:notation_others}


Throughout this article, we denote $ q $ a complex variable with $ \abs{q} < 1 $.
We fix a positive number $ k \in \Z_{>0} $ and let $ \zeta_k \coloneqq e^{2\pi\iu/k} $.
Let $ [k] \coloneqq \{ 0, \dots, k-1 \} $ and
\begin{align}
	X + [k] 
	&\coloneqq
	\{ \alpha + m \mid \alpha \in X, m \in [k] \},
	\\
	Y + [k]^2 
	&\coloneqq
	\{ \gamma + (m, n) \mid \gamma \in Y, m, n \in [k] \}
\end{align}
for $ X \subset \R $ and $ Y \subset \R^2 $.
For a complex number $ z $, we denote $ \bm{e}(z) \coloneqq e^{2\pi\iu z} $.
For a real number $ x $, let
\begin{align}
	\sgn(x) \coloneqq 
	\begin{cases}
		1 & x \ge 0, \\
		-1 & x < 0,
	\end{cases}
	\qquad
	\sgn_0(x) \coloneqq 
	\begin{cases}
		1 & x > 0, \\
		0 & x = 0, \\
		-1 & x < 0.
	\end{cases}
\end{align}


\section*{Acknowledgement} \label{sec:acknowledgement}


The author would like to show the greatest appreciation to Takuya Yamauchi for giving much advice. 
Toshiki Matsusaka gave me much advice and taught me many related studies, for example, studies of indefinite false theta functions.
The author would like to thank Yuji Terashima, Akihito Mori, Kazuhiro Hikami, and Robert Osburn for giving many comments. 
The author is supported by JSPS KAKENHI Grant Number JP 20J20308.


\section{Properties of Gauss sums} \label{sec:Gauss_sum}


In this section, we prepare properties of Gauss sums which we need to calculate WRT invariants.


\subsection{Reciprocity of Gauss sums} \label{subsec:reciprocity}


The following property is called ``reciprocity of Gauss sums.''
We use this property to calculate WRT invariants.

\begin{prop}[Reciprocity of Gauss sums, {\cite[Theorem 1]{DT}}] \label{prop:reciprocity}
	Let $ L $ be a lattice of finite rank $ n $ equipped with a non-degenerated symmetric $ \Z $-valued bilinear form $ \sprod{\cdot, \cdot} $.
	We write
	\[
	L' \coloneqq \{ y \in L \otimes \R \mid \sprod{x, y} \in \Z \text{ for all } x \in L \} 
	\]
	for the dual lattice.
	Let $ 0 < k \in \abs{L'/L} \Z, u \in \frac{1}{k} L $, 
	and $ h \colon L \otimes \R \to L \otimes \R $ be a self-adjoint automorphism such that $ h(L') \subset L' $ and $ \frac{k}{2} \sprod{y, h(y)} \in \Z $ for all $ y \in L' $.
	Let $ \sigma $ be the signature of the quadratic form $ \sprod{x, h(y)} $.
	Then it holds
	\begin{align}
		&\sum_{x \in L/kL} \bm{e} \left( \frac{1}{2k} \sprod{x, h(x)} + \sprod{x, u} \right)
		= \,
		&\frac{\bm{e}(\sigma/8) k^{n/2}}{\sqrt{\abs{L'/L} \abs{\det h}}}
		\sum_{y \in L'/h(L')} \bm{e} \left( -\frac{k}{2} \sprod{y + u, h^{-1}(y + u)} \right).
	\end{align}
\end{prop}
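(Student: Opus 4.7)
The plan is to derive the reciprocity via Poisson summation applied to a smoothed Gaussian lattice sum on $L$, in the spirit of the classical Landsberg--Schaar identity (which is the one-dimensional case). Absolute convergence is ensured by inserting a Gaussian damping factor, and the identity will emerge from computing the same regularized sum in two different ways and passing to a limit.

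First I would introduce the regularized sum
\[
S_t := \sum_{x \in L} \bm{e}\!\left( \tfrac{1}{2k} \sprod{x, h(x)} + \sprod{x, u} \right) e^{-t \sprod{x, x}}
\]
for $t > 0$. Decomposing $L = \bigsqcup_{\bar x \in L/kL}(\bar x + kL)$ and using the three integrality hypotheses $h(L') \subset L'$, $\tfrac{k}{2}\sprod{y, h(y)} \in \Z$ for $y \in L'$, and $ku \in L$, one verifies that the phase factor in the summand is $kL$-periodic modulo $\Z$. Hence $S_t$ factors as the desired left-hand side times a Gaussian sum over $kL$, and the $t \to 0^+$ asymptotics of the latter produce an explicit volume factor of order $t^{-n/2}$.

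Next I would compute $S_t$ a second way by applying Poisson summation directly. The Fourier transform of $x \mapsto \bm{e}(\tfrac{1}{2k}\sprod{x, h(x)} + \sprod{x, u}) e^{-t\sprod{x,x}}$ is obtained by completing the square in $x$ (which absorbs the linear term $\sprod{x, u}$ and produces the shift $y \mapsto y + u$) and analytically continuing the Gaussian integral from $t > 0$ to the imaginary axis: this yields the signature factor $\bm{e}(\sigma/8)$ together with a Jacobian $k^{n/2}/\sqrt{\abs{\det h}}$. The resulting sum, originally indexed by the Euclidean dual of $L$, collapses onto $L'/h(L')$ via orthogonality of characters --- legitimate precisely because $k \in \abs{L'/L}\Z$, which makes the phases well defined on this quotient. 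Equating the two expressions for $\lim_{t\to 0^+} S_t$ then gives the stated identity.

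The main technical obstacle will be careful bookkeeping of the volume factors. The Euclidean dual of $L$ arising from Poisson summation differs from $L'$ by a factor $\sqrt{\abs{L'/L}}$, and the further collapse onto the quotient $L'/h(L')$ combines with the Fourier Jacobian $\sqrt{\abs{\det h}}$ to produce the precise denominator $\sqrt{\abs{L'/L}\, \abs{\det h}}$. A secondary subtlety lies in tracking the argument of the square root of $\det h$ under the analytic continuation from $t > 0$ to the imaginary axis, which is what produces the eighth root of unity $\bm{e}(\sigma/8)$ with the correct value of $\sigma$, the signature of the form $\sprod{x, h(y)}$.
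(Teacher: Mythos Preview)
The paper does not prove this proposition at all: it is quoted verbatim as \cite[Theorem 1]{DT} and used as a black box throughout. So there is no ``paper's own proof'' to compare against.

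That said, your outline is the standard route to such reciprocity formulas and is essentially what one finds in the cited reference. The periodicity check you sketch is correct: for $x \in L$ and $\ell \in L$ the difference of phases is $\langle x, h(\ell)\rangle + \tfrac{k}{2}\langle \ell, h(\ell)\rangle + \langle \ell, ku\rangle$, and each term lies in $\Z$ by, respectively, $h(L') \subset L'$ (together with $L \subset L'$), the hypothesis $\tfrac{k}{2}\langle y, h(y)\rangle \in \Z$ on $L'$, and $ku \in L$. The Poisson step and the bookkeeping of the factors $\bm{e}(\sigma/8)$, $k^{n/2}$, $\sqrt{\abs{L'/L}}$, and $\sqrt{\abs{\det h}}$ are exactly where the work lies; your identification of these as the delicate points is accurate. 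If you want to turn this into a self-contained proof you will need to be explicit about which Euclidean structure you use for the Fourier transform (so that the two volume normalizations can be matched cleanly) and about the branch of the square root in the complex Gaussian integral, but there is no missing idea.
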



\subsection{Vanishing results of weighted Gauss sums} \label{subsec:Gauss_sum_vanish}


In this subsection, we prove the following vanishing results of weighted Gauss sums.
This proposition is a generalization of \cite[Proposition 4.2 (ii)]{MM}, which deals with only positive definite quadratic forms. 

\begin{prop} \label{prop:Gauss_sum_vanish}
	Let  $ k, M, N, a, b $, and $ c $ be non-zero integers 
	and put
	\[
	S = \pmat{Ma & MNb \\ MNb & Nc}, \quad
	Q(m, n) = Mam^2 + 2MNbmn + Ncn^2.
	\]
	Let $ \calT \subset \frac{1}{2M}\Z $ be a finite set such that for $ \alpha \in \calT $, it holds $ \gcd(2M \alpha, M) = 1 $ and
	$ M\alpha, M\alpha^2 \bmod \Z $ are independent of $ \alpha $.
	Let $ \chi \colon \calT \to \bbC $ be a map such that
	\[
	\sum_{\alpha \in \calT} \chi(\alpha) = 0.
	\]
	Let $ \beta \in \frac{1}{2N} \Z $.
	Then, it holds
	\[
	\sum_{\alpha \in \calT} \chi(\alpha)
	\sum_{m \in \Z/k\Z}
	\bm{e}\left( \frac{1}{k} Q(m+\alpha, \beta) \right) 
	= 0.
	\]
\end{prop}

To prove this, we need the following lemma.

\begin{lem}[{\cite[Lemma 6.5]{M_plumbed}}] \label{lem:Gauss_sum_indep}
	Let $ k, M \in \Z \smallsetminus \{ 0 \} $ and $ a, b \in \Z $ be integers such that $ \gcd(a, M) = 1 $.
	For $ \alpha \in \frac{1}{2M}\Z $ such that $ \gcd(2M \alpha, M) = 1 $, the complex number 
	\[
	\sum_{\mu \in \Z/k\Z + \alpha} \bm{e} \left( \frac{M}{k} \left( a \mu^2 + b \mu \right) \right)
	\]
	depends only on $ M\alpha^2, M\alpha \bmod \Z $. 
\end{lem}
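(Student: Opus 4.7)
Write $\alpha = p/(2M)$ with $p \in \Z$ and $\gcd(p, M) = 1$. Then $M\alpha \equiv p/2$ and $M\alpha^2 \equiv p^2/(4M)$ modulo $\Z$, so the lemma amounts to showing that the sum depends on $p$ only through $p \bmod 2$ and $p^2 \bmod 4M$. The first step is to substitute $\mu = \alpha + j$ for $j = 0, 1, \ldots, k-1$ and split off the $\alpha$-dependent phase: setting $c := 2Ma\alpha + Mb = ap + Mb \in \Z$,
\[
\sum_{\mu \in \Z/k\Z + \alpha} \bm{e}\!\left(\tfrac{M(a\mu^2 + b\mu)}{k}\right) = \bm{e}\!\left(\tfrac{Ma\alpha^2 + Mb\alpha}{k}\right) I_c, \qquad I_c := \sum_{j=0}^{k-1} \bm{e}\!\left(\tfrac{Maj^2 + cj}{k}\right).
\]

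Next, I plan to apply Proposition \ref{prop:reciprocity} to $I_c$, taking the lattice $L = \Z$ with the standard pairing, the self-adjoint endomorphism $h(x) = 2Max$, and the shift $u = c/k$. The hypotheses $h(L') \subset L'$ and $(k/2)\sprod{y, h(y)} \in \Z$ are immediate. Reciprocity dualizes $I_c$ to a sum over $\Z/2Ma\Z$ with an extracted phase $\bm{e}(-c^2/(4Mak))$. Direct substitution of $c = ap + Mb$ and $\alpha = p/(2M)$ gives the crucial cancellation
\[
\tfrac{Ma\alpha^2 + Mb\alpha}{k} - \tfrac{c^2}{4Mak} = -\tfrac{Mb^2}{4ak},
\]
which is manifestly $\alpha$-independent. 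Combined with the identity $-cy/(2Ma) = -by/(2a) - py/(2M)$, this reduces the problem to showing that
\[
\Sigma(p) := \sum_{y \in \Z/2Ma\Z} \bm{e}\!\left(-\tfrac{ky^2}{4Ma} - \tfrac{by}{2a}\right) \bm{e}\!\left(-\tfrac{py}{2M}\right)
\]
depends on $p$ only through $p \bmod 2$ and $p^2 \bmod 4M$, since the prefactor has been absorbed into an $\alpha$-independent constant.

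The final step is to decompose the $y$-sum according to the residue $y \bmod 2M$. For each residue class, the inner sum over the lifts to $\Z/2Ma\Z$ is a classical Gauss sum whose vanishing is governed by $\gcd(2Ma, k)$. On the surviving residue classes, completing the square in $y$ and tracking the resulting quadratic phase converts the remaining $p$-dependence into the form $\bm{e}(m_1 p^2/(4M) + n_1 p/2)$ for integers $m_1, n_1$, which is manifestly a function only of $p^2 \bmod 4M$ and $p \bmod 2$. The main obstacle is this last matching: identifying the precise vanishing pattern of the inner Gauss sums and verifying that on non-vanishing classes the phase emerging from completing the square has integer coefficients against $p^2/(4M)$ and $p/2$. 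The hypothesis $\gcd(p, M) = 1$ is essential at this point, as it ensures that $p \bmod M$ interacts cleanly with the square-completion and enters the final expression only through $p^2 \bmod 4M$, never through a finer residue.
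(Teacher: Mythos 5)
The paper itself does not prove this lemma --- it is imported verbatim from \cite[Lemma 6.5]{M_plumbed} --- so there is no in-text argument to measure you against; I can only judge the proposal on its merits. The first half is correct and is genuine progress: writing $\alpha = p/(2M)$, extracting the phase $\bm{e}\left((Ma\alpha^2+Mb\alpha)/k\right)$, applying \cref{prop:reciprocity} with $L=\Z$, $h(x)=2Max$, $u=c/k$, $c=ap+Mb$, and the cancellation $(Ma\alpha^2+Mb\alpha)/k - c^2/(4Mak) = -Mb^2/(4ak)$ all check out. This correctly reduces the lemma to showing that
\[
\Sigma(p) \,=\, \sum_{y \in \Z/2Ma\Z} \bm{e}\!\left(-\frac{ky^2+2cy}{4Ma}\right), \qquad c = ap+Mb,
\]
depends on $p$ only through $p \bmod 2$ and $p^2 \bmod 4M$ (equivalently, on $c$ only through $c \bmod 2$ and $c^2 \bmod 4Ma$).

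The gap is that this last reduction carries essentially all of the arithmetic content of the lemma, and your sketch of it does not go through as stated. First, after splitting $y = r + 2Mw$ with $r \in \Z/2M\Z$, $w \in \Z/a\Z$, the inner sum over lifts is $\sum_{w}\bm{e}\left(-(kMw^2+(kr+bM)w)/a\right)$, a Gauss sum modulo $a$ with leading coefficient $kM$; its vanishing is governed by $\gcd(k,a)$ and the linear term $kr+bM$, not by $\gcd(2Ma,k)$. Second, and more seriously, after this split the entire $p$-dependence sits in the factors $\bm{e}(-pr/(2M))$ attached to the $2M$ outer residues $r$, multiplied by the phase $\bm{e}(-kr^2/(4Ma))$; ``completing the square in $y$'' here means shifting $r$ by $ap/k+\cdots$, which is not an integer, so the sum cannot be reindexed --- performing that shift rigorously is exactly another application of reciprocity and returns you to the original sum. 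The collapse of $\Sigma(p)$ to a single exponential $\bm{e}\left(m_1p^2/(4M)+n_1p/2\right)$ does hold when $\gcd(2Ma,k)=1$, but only via the closed-form evaluation of Gauss sums with linear term (using $4Ma\cdot\overline{4Ma}\equiv 1 \bmod k$ to clear the $k$ from the denominator), and in the degenerate cases $\gcd(2Ma,k)>1$ the sum does not reduce to one term at all, so a separate argument (tracking which sub-sums survive and why their phases still depend only on $c^2 \bmod 4Ma$) is required. Finally, the hypothesis $\gcd(p,M)=1$, which you rightly flag as essential, is never actually invoked anywhere concrete in the sketch. As it stands the proposal is a plausible plan with a correct first half, but the decisive step is missing.
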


\begin{proof}[Proof of $ \cref{prop:Gauss_sum_vanish} $]
	To begin with, we remark that the sum for $ m $ in the statement is well-defined since
	\[
	Q(m + k + \alpha, \beta) - Q(m + \alpha, \beta)
	= 2M(m+\alpha) ak + Ma k^2 + 2bMN \beta k
	\equiv 0 \bmod k.
	\]
	The left hand side in the statement is equal to
	\[
	\bm{e}\left( \frac{Nc}{k} \beta^2 \right)
	\sum_{\alpha \in \calT} \chi(\alpha)
	\sum_{m \in \Z/k\Z}
	\bm{e}\left( \frac{M}{k} (a (m + \alpha)^2 + 2bN\beta (m + \alpha) ) \right).
	\]
	The sum for $ m $ is independent of $ \alpha $ by the assumption and \cref{lem:Gauss_sum_indep}.
	Thus, we obtain the statement.
	%
\end{proof}


\section{Notations for graphs} \label{sec:fund_data}


In this section, we prepare notations which we use throughout this article.

To begin with, we prepare notations for H-graphs.
We already introduced some of them in \cref{sec:intro}.

Let $ \Gamma $ be the H-graph with vertices $ 1, \dots, 6 $ and weights $ w_v \in \Z $ for each vertex $ v $ shown in \cref{fig:H-graph}.
Let $ M(\Gamma) $ be the plumed 3-manifold obtained from $ S^3 $ through the surgery along $ \Gamma $.

\begin{rem} \label{rem:Neumann}
	Neumann's theorem (\cite[Proposition 2.2]{Neumann_Lecture}, \cite[Theorem 3.1]{Neumann_work}) states that two plumbing graphs give homeomorphic manifolds if and only if they are related by a sequence of Neumann shown in \cref{fig:Neumann}.
	It is known that a plumbing graph gives a Seifert manifold if it has at most one vertex whose degree is greater than $ 2 $.
	Thus, if $ w_i \in \{ \pm 1 \} $ for some $ 3 \le i \le 6 $, then $ M(\Gamma) $ is a Seifert manifold.
\end{rem}

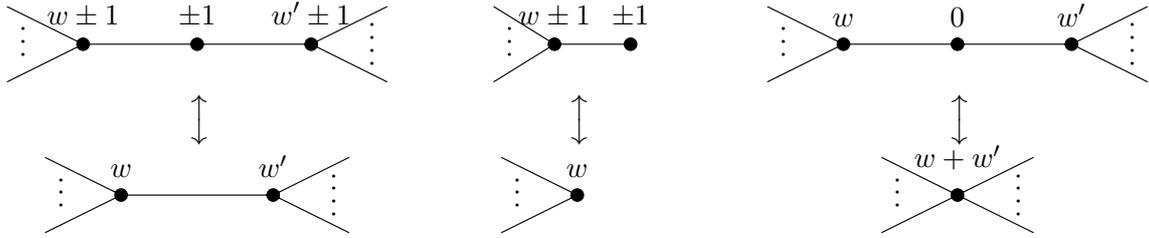
\begin{figure}[htp]
	\centering
	\begin{tikzpicture}
		\draw[fill]
		(-1.5,0) node[above=0.1cm]{$w \pm 1$} circle(0.5ex)--
		(0,0) node[above=0.1cm]{$\pm 1$} circle(0.5ex)--
		(1.5,0) node[above=0.1cm]{$w' \pm 1$} circle(0.5ex)
		(-2.5,0.5) node[above]{}--(-1.5,0) node[above]{}
		(-2.3,0) node[rotate=270]{$\cdots$}
		(-2.5,-0.5) node[above]{}--(-1.5,-0) node[above]{}
		(1.5,0) node[above]{}--(2.5,0.5) node[above]{}
		(2.3,0) node[rotate=270]{$\ldotp\ldotp\ldotp\ldotp$}
		(1.5,0) node[above]{}--(2.5,-0.5) node[above]{}
		(0,-1) node[rotate=270]{$\longleftrightarrow$}
		(-1,-2) node[above=0.1cm]{$ w $} circle(0.5ex)--
		(1,-2) node[above=0.1cm]{$ w' $} circle(0.5ex)
		(-2,-1.5) node[above]{}--(-1,-2) node[above]{}
		(-1.8,-2) node[rotate=270]{$\cdots$}
		(-2,-2.5) node[above]{}--(-1,-2) node[above]{}
		(1,-2) node[above]{}--(2,-1.5) node[above]{}
		(1.8,-2) node[rotate=270]{$\ldotp\ldotp\ldotp\ldotp$}
		(1,-2) node[above]{}--(2,-2.5) node[above]{};
		\draw[fill]
		(4.7,0) node[above=0.1cm]{$w \pm 1$} circle(0.5ex)--
		(5.7,0) node[above=0.1cm]{$\pm 1$} circle(0.5ex)
		(3.9,0.5) node[above]{}--(4.7,0) node[above]{}
		(4.1,0) node[rotate=270]{$\cdots$}
		(3.9,-0.5) node[above]{}--(4.7,0) node[above]{}
		(5,-1) node[rotate=270]{$\longleftrightarrow$}
		(5,-2) node[above=0.1cm]{$w$} circle(0.5ex)
		(4,-1.5) node[above]{}--(5,-2) node[above]{}
		(4.2,-2) node[rotate=270]{$\cdots$}
		(4,-2.5) node[above]{}--(5,-2) node[above]{};
		\draw[fill]
		(8.5,0) node[above=0.1cm]{$w$} circle(0.5ex)--
		(10,0) node[above=0.1cm]{$0$} circle(0.5ex)--
		(11.5,0) node[above=0.1cm]{$w'$} circle(0.5ex)
		(7.5,0.5) node[above]{}--(8.5,0) node[above]{}
		(7.7,0) node[rotate=270]{$\cdots$}
		(7.5,-0.5) node[above]{}--(8.5,0) node[above]{}
		(11.5,0) node[above]{}--(12.5,0.5) node[above]{}
		(12.3,0) node[rotate=270]{$\ldotp\ldotp\ldotp\ldotp$}
		(11.5,0) node[above]{}--(12.5,-0.5) node[above]{}
		(10,-1) node[rotate=270]{$\longleftrightarrow$}
		(10,-2) node[above=0.2cm]{$w + w'$} circle(0.5ex)
		(9,-1.5) node[above]{}--(10,-2) node[above]{}
		(9.2,-2) node[rotate=270]{$\cdots$}
		(9,-2.5) node[above]{}--(10,-2) node[above]{}
		(10,-2) node[above]{}--(11,-1.5) node[above]{}
		(10.8,-2) node[rotate=270]{$\ldotp\ldotp\ldotp\ldotp$}
		(10,-2) node[above]{}--(11,-2.5) node[above]{};
	\end{tikzpicture}
	\caption{Neumann moves}
	\label{fig:Neumann}
\end{figure}

Let
\[
W = \pmat{w_1 & 1 & 1 & 1 & 0 & 0 \\
	1 & w_2 & 0 & 0 & 1 & 1 \\
	1 & 0 & w_3 & 0 & 0 & 0 \\
	1 & 0 & 0 & w_4 & 0 & 0 \\
	0 & 1 & 0 & 0 & w_5 & 0 \\
	0 & 1 & 0 & 0 & 0 & w_6 }
\]
be the adjacency matrix of $ \Gamma $.
Let $ S \in \Sym_2(\Z) $ be the upper left $ 2 \times 2 $-submatrix of $ -W^{-1} $.
This is the same notation as \cite{GPPV,MM}. 
Let $ Q(m, n) \coloneqq (m, n) S {}^t\!(m, n) $ be a quadratic form and
\[
M \coloneqq w_3 w_4, \quad
N \coloneqq w_5 w_6, \quad
a \coloneqq -w_2 w_5 w_6 + w_5 + w_6, \quad
c \coloneqq -w_1 w_3 w_4 + w_3 + w_4.
\]
Let
\[
\varepsilon_\Gamma \coloneqq \sgn(Ma) \det W, \quad
S' \coloneqq \pmat{\abs{Ma} & \abs{MN} \\ \abs{MN} & \abs{Nc}}, \quad
Q'(m, n) = (m, n) S' \pmat{m \\ n}.
\]
Let $ \sigma_W $ and $ \sigma_S $ be the signatures of $ W $ and $ S $, respectively, that is, the number of positive eigenvalues minus the number of negative eigenvalues with multiplicity.

These data have the following properties.

\begin{lem} \label{lem:S}
	\begin{enumerate}
		\item \label{item:lem:S1} It holds $ \det S = MN \det W $.
		In particular, we have $ w_3, w_4, w_5, w_6 \neq 0 $ since $ \det W \neq 0 $. 
		\item \label{item:lem:S2} It holds
		\[
		S = \det W \pmat{Ma & MN \\ MN & Nc}, \quad
		Q(m, n) = \det W \left( Mam^2 + 2MNmn + Ncn^2 \right).
		\]
		\item \label{item:lem:S3}
		It holds $ \gcd(w_3, w_4) = \gcd(w_5, w_6) = 1 $.
		\item \label{item:lem:S4} It holds
		\[
		S(\Z^2)= M\Z \oplus N\Z \subset \Z^2,\quad
		(2S)^{-1}(\Z^2)/\Z^2 = \frac{1}{2M}\Z/\Z \oplus \frac{1}{2N}\Z/\Z.
		\]
		\item \label{item:lem:S:mod_k} For $ k \in \Z \smallsetminus \{ 0 \} $, $ \alpha \in \frac{1}{2M} \Z $ and  $ \beta \in \frac{1}{2N} \Z $, it holds
		\[
		Q(\alpha, \beta) \equiv Q(\alpha + k, \beta) \equiv Q(\alpha, \beta + k) \bmod k.
		\]
		\item \label{item:lem:S:sgn} The diagonal components $ Ma \det W $ and $ Nc \det W $ of $ S $ have the same sign.
		In particular, we have
		\[
		S' = \sgn(Ma) \pmat{Ma & \sgn(Na) MN \\ \sgn(Na) MN & Nc}, \quad
		Q(m, n) = \varepsilon_\Gamma Q'(m, \sgn(Na) n).
		\]
	\end{enumerate}
\end{lem}

\begin{proof}
	\cref{item:lem:S1} follows from \cite[Proposition 2.2 (ii)]{M_plumbed} or a direct calculation.
	Since
	\[
	S^{-1} = \pmat{c/M & -1 \\ -1 & a/N}
	\]
	by \cite[Proposition 2.2 (iii)]{M_plumbed} or a direct calculation, we obtain \cref{item:lem:S2}.
	To prove \cref{item:lem:S3}, we remark that \cref{item:lem:S1,item:lem:S2} and the assumption $ \det W = \pm 1 $ implies $ ac - MN = 1/\det W = \det W $.
	Since $ \gcd(w_3, w_4) $ divides both $ c $ and $ M $, it divides $ ac - MN = \pm 1 $ and thus we obtain $ \gcd(w_3, w_4) = 1 $. 
	By the same argument, we have $ \gcd(w_5, w_6) = 1 $.
	For \cref{item:lem:S4}, let
	\[
	A \coloneqq \pmat{c & -N\\ -M & a} \in \GL_2(\Z).
	\]
	Since it holds
	\[
	SA = \pmat{M & 0 \\ 0 & N},
	\]
	we have the statements.
	\cref{item:lem:S:mod_k} follows from 
	\[
	Q(\alpha + k, \beta) - Q(\alpha, \beta)
	= \det W \left( 2M\alpha ak + Ma k^2 + 2MN \beta k \right)
	\equiv 0 \bmod k
	\]
	and the same calculation for $ Q(\alpha, \beta + k) $.
	Since $ MNac = M^2 N^2 + MN \det W \ge M^2 N^2 - MN > 0 $ by \cref{item:lem:S1,item:lem:S2}, we have $ \sgn(Ma \det W) = \sgn(Nc \det W) $, which implies \cref{item:lem:S:sgn}.
\end{proof}

We prepare more notations.
Let
\[
\calS \coloneqq \calT \times \calU, \quad
\calT \coloneqq \left\{ \frac{1}{2} + \frac{e_3}{2w_3} + \frac{e_4}{2w_4} \relmiddle{|} e_3, e_4 \in \{ \pm 1 \} \right\}, \quad
\calU \coloneqq \left\{ \frac{1}{2} + \frac{e_5}{2w_5} + \frac{e_6}{2w_6} \relmiddle{|} e_5, e_6 \in \{ \pm 1 \} \right\}
\]
be sets,
$ \veps \colon \calS \to \Z $, $ \chi \colon \calT \to \Z $, and $ \psi \colon \calU \to \Z $ be maps defined as
\[
\veps(\alpha, \beta) \coloneqq \chi(\alpha) \psi(\beta), \quad
\chi( \alpha ) \coloneqq \sum_{\substack{
		e_3, e_4 \in \{ \pm 1 \} , \\
		\alpha = \frac{1}{2} + \frac{e_3}{2w_3} + \frac{e_4}{2w_4}
}} e_3 e_4, \quad
\psi (\beta) \coloneqq \sum_{\substack{
		e_5, e_6 \in \{ \pm 1 \} , \\
		\alpha = \frac{1}{2} + \frac{e_5}{2w_5} + \frac{e_6}{2w_6}
}} e_5 e_6,
\]
and $ G(q) $ and $ H(q) $ be rational functions defined as
\[
G(q) \coloneqq \frac{(q^{w_3} - q^{-w_3})(q^{w_4} - q^{-w_4})}{q^{M} - q^{-M}}, \quad
H(q) \coloneqq \frac{(q^{w_5} - q^{-w_5})(q^{w_6} - q^{-w_6})}{q^{N} - q^{-N}}.
\]

The sets $ \calT, \, \calU $, the maps $ \chi, \psi $ and the rational functions $ G(q), H(q) $ have the following properties.

\begin{rem} \label{rem:G(q)_calS_property}
	\begin{enumerate}
		\item \label{item:rem:G(q)_calS_subset}
		If $ w_3 \notin \{ \pm 1 \} $ or $ w_4 \notin \{ \pm 1 \} $, then it holds
		\[
		\calT \subset \frac{1}{2M} \Z \smallsetminus \frac{1}{2} \Z.
		\]
		Similarly, if $ w_5 \notin \{ \pm 1 \} $ or $ w_6 \notin \{ \pm 1 \} $, then it holds
		\[
		\calU \subset \frac{1}{2N} \Z \smallsetminus \frac{1}{2} \Z.
		\]
		\item \label{item:rem:G(q)_calS_rep}
		If $ w_3 \notin \{ \pm 1 \} $ and $ w_4 \notin \{ \pm 1 \} $, then it holds $ \calT \subset \left[ 0, 1 \right) $
		since
		\[
		\min \calT
		=
		\frac{1}{2} - \frac{1}{2 \abs{w_3}} - \frac{1}{2 \abs{w_4}}
		=
		\frac{(\abs{w_3} - 1) (\abs{w_4} - 1) - 1}{2 \abs{M}}
		> 0.
		\]
		Similarly, if $ w_5 \notin \{ \pm 1 \} $ and $ w_6 \notin \{ \pm 1 \} $, then it holds $ \calU \subset \left[ 0, 1 \right) $.
		\item \label{item:rem:G(q)_calS_rep_Seifert}
		If $ w_3 \in \{ \pm 1 \} $ and $ w_4 \notin \{ \pm 1 \} $, then it holds
		\[
		\calT = \left\{ \pm \frac{1}{2 w_4}, 1 \pm \frac{1}{2 w_4} \right\}, \quad
		\chi \left( \pm \frac{1}{2 w_4} \right) = \mp w_3, \quad
		\chi \left( 1 \pm \frac{1}{2 w_4} \right) = \pm w_3.
		\]
		If $ w_3 \in \{ \pm 1 \} $ and $ w_4 \in \{ \pm 1 \} $, then it holds
		\[
		\calT = \left\{ \pm \frac{1}{2}, \frac{3}{2} \right\}, \quad
		\chi \left( -\frac{1}{2} \right) = w_3 w_4, \quad
		\chi \left( \frac{1}{2} \right) = -2 w_3 w_4, \quad
		\chi \left( \frac{3}{2} \right) = w_3 w_4.
		\]
		Similar statements hold for $ w_5, w_6, \calU $. 
		\item \label{item:rem:G(q)_calS_zero}
		It holds
		\[
		\sum_{\alpha \in \calT} \chi(\alpha) = 0, \quad
		\sum_{\beta \in \calU} \psi(\beta) = 0.
		\]
		\item \label{item:rem:G(q)_calS_zero_alpha}
		It holds
		\begin{align}
			\sum_{\alpha \in \calT} \chi(\alpha) \alpha 
			=
			\sum_{ e_3, e_4 \in \{ \pm 1 \} }
			e_3 e_4 \left( \frac{1}{2} + \frac{e_3}{2w_3} + \frac{e_4}{2w_4} \right)
			=
			\sum_{ e_3, e_4 \in \{ \pm 1 \} }
			\left( \frac{e_4}{2w_3} + \frac{e_3}{2w_4} \right)
			= 0.
		\end{align}
		Similarly, it holds
		\[		
		\sum_{\beta \in \calU} \psi(\beta) \beta = 0.
		\]
		\item \label{item:rem:G(q)_calS_expansion}
		For a complex number $ q $ with $ \abs{q}^{\sgn(M)} < 1 $, it holds
		\[
		G(q) = -\sum_{\alpha \in \calT} \chi(\alpha) \sum_{m=0}^{\infty} q^{2M(m + \alpha)}.
		\]
		For a complex number $ q $ with $ \abs{q}^{\sgn(N)} < 1 $, it holds
		\[
		H(q) = -\sum_{\beta \in \calU} \psi(\beta) \sum_{n=0}^{\infty} q^{2N(n + \beta)}.
		\]
	\end{enumerate}
\end{rem}


\section{Representation of the WRT invariants} \label{sec:WRT}


In this section, we use the notations for an H-graph $ \Gamma $ prepared in \cref{sec:fund_data}.
Our main result in this section is the following representation of the WRT invariants of $ M(\Gamma) $.

\begin{prop} \label{prop:WRT_rep_Bernoulli}
	\begin{align}
		\WRT_k(M(\Gamma)) 
		= \,
		&\zeta_{4k}^{3\sigma_W - \sum_{i=1}^{6}w_{i} - \sum_{i=3}^{6} 1/w_{i}}
		\bm{e}(\sigma'_W/8)
		\\
		&\frac{1}{2k^2 (\zeta_{2k}-\zeta_{2k}^{-1})}
		\sum_{\gamma \in \calS} 
		\veps(\gamma)
		\sum_{l = {}^t\!(m, n) \in [k]^2}
		\bm{e}\left( \frac{1}{k} Q(\gamma + l) \right) mn
		\\
		= \,
		&\zeta_{4k}^{3\sigma_W - \sum_{i=1}^{6}w_{i} - \sum_{i=3}^{6} 1/w_{i}}
		\bm{e}(\sigma'_W/8)
		\\
		&\frac{\sgn(Na)}{2k^2 (\zeta_{2k}-\zeta_{2k}^{-1})}
		\sum_{\gamma \in \calS} 
		\veps(\gamma)
		\sum_{l = {}^t\!(m, n) \in [k]^2}
		\bm{e}\left( \frac{\varepsilon_\Gamma}{k} Q'(\gamma + l) \right) mn,
	\end{align}
	where $ \sigma'_W \coloneqq -\sigma_W - \sigma_S + \sgn(w_3) + \sgn(w_4) +\sgn(w_5) + \sgn(w_6) $.
\end{prop}

To begin with, we prove the following representation.

\begin{prop} \label{prop:WRT_first_calculation}
	\begin{align}
		&\WRT_k(M(\Gamma)) 
		\\
		= \, &\frac{\zeta_{4k}^{3\sigma_W - \sum_{i=1}^{6}w_{i} - \sum_{i=3}^{6} 1/w_{i}} 
			\bm{e}((-\sigma_W + \sgn(w_3) + \sgn(w_4) +\sgn(w_5) + \sgn(w_6))/8)}
		{4k (\zeta_{2k}-\zeta_{2k}^{-1}) \sqrt{\abs{MN}}}
		\\
		&\sum_{l = {}^t\!(m, n) \in (\Z \smallsetminus k\Z)^2/2kS(\Z^2)}
		\bm{e} \left( -\frac{1}{4k} {}^t\!l S^{-1} l \right)
		G(\zeta_{2Mk}^{m}) H(\zeta_{2Nk}^{n}).
	\end{align}
\end{prop}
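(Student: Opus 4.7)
The plan is to begin from the Reshetikhin--Turaev surgery formula for $M(\Gamma)$, which up to the standard framing phase $\zeta_{4k}^{3\sigma_W-\sum_i w_i}\bm{e}(\sigma_W/8)$ presents $\WRT_k(M(\Gamma))$ as a weighted Gauss sum
\[
\sum_{\vec{n}\in(\Z/2k\Z)^6}\frac{\prod_{i=3}^{6}(\zeta_{2k}^{n_i}-\zeta_{2k}^{-n_i})}{(\zeta_{2k}^{n_1}-\zeta_{2k}^{-n_1})(\zeta_{2k}^{n_2}-\zeta_{2k}^{-n_2})}\bm{e}\!\left(\frac{{}^t\!\vec{n}W\vec{n}}{4k}\right),
\]
with singular residues $n_1,n_2\in\{0,k\}$ omitted. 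The numerator factors arise from the four leaves and the denominators from the degree-three vertices $1,2$. Because $W_{22}=\diag(w_3,w_4,w_5,w_6)$ is diagonal, the inner summation decouples into four independent 1D sums.

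I would expand each leaf factor as $\zeta_{2k}^{n_v}-\zeta_{2k}^{-n_v}=\sum_{\epsilon_v=\pm 1}\epsilon_v\zeta_{2k}^{\epsilon_v n_v}$ and apply \cref{prop:reciprocity} to each 1D sum with $L=\Z$, $h(x)=w_v x$, $k_{\mathrm{prop}}=2k$, and $u=(n_?+\epsilon_v)/(2k)$ (where $n_?=n_1$ for $v\in\{3,4\}$ and $n_?=n_2$ for $v\in\{5,6\}$). Each application produces a prefactor $\bm{e}(\sgn(w_v)/8)\sqrt{2k/|w_v|}$ and a dual sum over $y_v\in\Z/w_v\Z$ with summand $\bm{e}\!\left(-(2ky_v+n_?+\epsilon_v)^2/(4kw_v)\right)$. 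The crucial combinatorial step is to identify the triple $(n_1,y_3,y_4)\in\Z/2k\Z\times\Z/w_3\Z\times\Z/w_4\Z$ with a single variable $m\in\Z/2kM\Z$: since $\gcd(2kw_3,2kw_4)=2k\gcd(w_3,w_4)=2k$ by \cref{lem:S} \cref{item:lem:S3}, and $s:=n_1+2ky_3$ and $t:=n_1+2ky_4$ are automatically congruent modulo $2k$, the Chinese Remainder Theorem produces a unique $m$ with $m\equiv s\pmod{2kw_3}$ and $m\equiv t\pmod{2kw_4}$. An identical manipulation on $\{5,6\}$ produces $n\in\Z/2kN\Z$.

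Under this identification, the elementary observation $(s^2-m^2)/(4kw_v)\in\Z$ shows $\bm{e}(-(s+\epsilon_v)^2/(4kw_v))=\bm{e}(-(m+\epsilon_v)^2/(4kw_v))$, so summing over $\epsilon_3,\epsilon_4$ gives
\[
\sum_{\epsilon_3,\epsilon_4=\pm 1}\epsilon_3\epsilon_4\,\bm{e}\!\left(-\frac{(m+\epsilon_3)^2}{4kw_3}-\frac{(m+\epsilon_4)^2}{4kw_4}\right)=(\zeta_{2w_3k}^m-\zeta_{2w_3k}^{-m})(\zeta_{2w_4k}^m-\zeta_{2w_4k}^{-m})\,\bm{e}\!\left(-\frac{(w_3+w_4)(m^2+1)}{4kM}\right);
\]
dividing by the vertex-$1$ denominator $\zeta_{2k}^{n_1}-\zeta_{2k}^{-n_1}=\zeta_{2k}^m-\zeta_{2k}^{-m}$ produces $G(\zeta_{2Mk}^m)$ directly from its definition, and likewise for $H(\zeta_{2Nk}^n)$. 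The completed-square contribution $-(w_3+w_4)m^2/(4kM)$ combines with the original $w_1m^2/(4k)$ through the identity $c=-w_1M+w_3+w_4$ to produce $-cm^2/(4kM)$; together with the analogous simplification at vertex $2$ and the unchanged cross term $mn/(2k)$, this assembles into $\bm{e}(-{}^t\!lS^{-1}l/(4k))$ by the explicit form of $S^{-1}$ in \cref{lem:S} \cref{item:lem:S2}. The excluded residues $n_1,n_2\in\{0,k\}$ correspond exactly to $m,n\in k\Z$, giving the range $(\Z\smallsetminus k\Z)^2$.

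The principal obstacle is the phase bookkeeping. The four leaf reciprocities contribute $(2k)^2\bm{e}((\sum_{i=3}^{6}\sgn w_i)/8)/\sqrt{|MN|}$; the ``$+1$'' corrections accumulated in the $\epsilon$-sums assemble into $\bm{e}(-\sum_{i=3}^{6}(1/w_i)/(4k))=\zeta_{4k}^{-\sum_{i=3}^{6}1/w_i}$; and the Reshetikhin--Turaev normalization supplies the framing $\zeta_{4k}^{3\sigma_W-\sum_i w_i}\bm{e}(\sigma_W/8)$ together with an $S^3$-normalization proportional to $(\zeta_{2k}-\zeta_{2k}^{-1})^{-1}$. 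Combining these with the overall $(2k)^{-V/2}$ from the $V=6$ vertices produces the stated coefficient, and no further step requires a new idea.
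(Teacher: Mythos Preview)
Your proposal is correct and follows essentially the same approach as the paper: start from the surgery formula, expand the leaf factors as $\sum_{\epsilon_v}\epsilon_v\zeta_{2k}^{\epsilon_v n_v}$, apply one-dimensional Gauss sum reciprocity (\cref{prop:reciprocity}) to each of the four leaf variables, use the Chinese Remainder Theorem together with $\gcd(w_3,w_4)=\gcd(w_5,w_6)=1$ (\cref{lem:S}\,\cref{item:lem:S3}) to merge the resulting dual sums with $n_1,n_2$ into single variables $m\in\Z/2kM\Z$ and $n\in\Z/2kN\Z$, and then recognise $G,H$ and the quadratic form $-{}^t\!lS^{-1}l$ via the explicit formula for $S^{-1}$ in \cref{lem:S}\,\cref{item:lem:S2}. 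The only cosmetic difference is that the paper first applies CRT to the pair $(y_3,y_4)\in\Z/w_3\Z\times\Z/w_4\Z$ and then shifts by $n_1$, invoking \cref{lem:S}\,\cref{item:lem:S4} to identify the summation range with $\Z^2/2kS(\Z^2)$, whereas you combine all three of $(n_1,y_3,y_4)$ in one CRT step; the two are equivalent.
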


\begin{proof}
	An idea of our proof is the same as in \cite[Proposition 6.1]{MM} and \cite[Proposition 3.2]{M_plumbed}.
	Our starting point is the following representation in \cite[Equation A.12]{GPPV}:
	\begin{align} \label{eqn:WRT}
		\WRT_k(M(\Gamma))
		=
		\frac{\bm{e}(-\sigma_W/8) \zeta_{4k}^{3\sigma_W - w_1 - \cdots w_6}}
		{2 \sqrt{2k}^6 (\zeta_{2k} - \zeta_{2k}^{-1})}
		\sum_{l \in (\Z \smallsetminus kZ)^6/2k\Z^6} \zeta_{4k}^{{}^t\!l W l}
		\prod_{1 \le i \le 6} \left( \zeta_{2k}^{l_v} - \zeta_{2k}^{-l_v} \right)^{2-\deg(i)}.
	\end{align}	
	Since it holds $ \zeta_{2k}^{l_i} - \zeta_{2k}^{-l_i} = 0 $ for $ 3 \le i \le 6, \, l_i \in k\Z $, we can write
	\begin{equation} \label{eq:WRT_Gauss_sum} 
		\begin{aligned}
			&\sum_{l \in (\Z \smallsetminus k\Z)^6/2k\Z^6} \zeta_{4k}^{{}^t\!l W l}
			\prod_{1 \le i \le 6} \left( \zeta_{2k}^{l_i} - \zeta_{2k}^{-l_i} \right)^{2-\deg(i)}
			\\
			= \,
			&\sum_{l \in \left( (\Z \smallsetminus k\Z) / 2k\Z \right)^{2} \oplus (\Z/ 2k \Z)^{4}} 
			\zeta_{4k}^{{}^t\!l W l}
			\prod_{1 \le i \le 6} \left( \zeta_{2k}^{l_i} - \zeta_{2k}^{-l_i} \right)^{2-\deg(i)}.
		\end{aligned}
	\end{equation}
	Since we can write
	\begin{align}
		\prod_{3 \le j \le 6} \left( \zeta_{2k}^{l_j} - \zeta_{2k}^{-l_j} \right)
		&= 
		\prod_{3 \le j \le 6} \sum_{e_j \in \{ \pm 1 \} } e_j \zeta_{2k}^{e_j l_j}
	\end{align}
	and for $ l \in \Z^6 $ it holds
	\[
	{}^t\!l W l
	= w_1 l_1^2 + 2 l_1 l_2 + w_2 l_2^2 + \sum_{i \in \{ 1, 2\}} \sum_{j \in \{ 2i+1, 2i+2 \}} \left( w_j l_j^2 + 2 l_i l_j \right),
	\]
	the right hand side in \cref{eq:WRT_Gauss_sum} equals to
	\begin{align}
		\sum_{l_1, l_2 \in (\Z \smallsetminus k\Z) / 2k\Z}
		\zeta_{4k}^{w_1 l_1^2 + 2 l_1 l_2 + w_2 l_2^2}
		\prod_{i \in \{ 1, 2\}}
		&\frac{1}{\zeta_{2k}^{l_i} - \zeta_{2k}^{-l_i}}
		\\
		&\prod_{j \in \{ 2i+1, 2i+2 \}} \sum_{e_j \in \{ \pm 1 \} } e_j
		\sum_{l_j \in \Z/ 2k \Z}
		\zeta_{4k}^{w_j l_j^2 + 2 (l_i + e_j) l_j}.
	\end{align}
	Since the last sum for $ l_j $ is equal to 
	\[	
	\frac{\bm{e}(\sgn(w_j)/8) \sqrt{2k}}{\sqrt{\abs{w_j}}}
	\sum_{l_j \in \Z/ w_j \Z}
	\zeta_{4k w_j}^{ -\left( 2k l_j + l_i + e_j \right)^2 }
	\]
	by \cref{prop:reciprocity}, the right hand side in \cref{eq:WRT_Gauss_sum} can be written as
	\begin{equation} \label{eq:WRT_Gauss_sum2}
		\begin{aligned}
			&\frac{4k^2 \bm{e}((\sgn(w_3) + \sgn(w_4) +\sgn(w_5) + \sgn(w_6))/8)}{\sqrt{\abs{MN}}}
			\\
			&
			\sum_{l_1, l_2 \in (\Z \smallsetminus k\Z) / 2k\Z}
			\zeta_{4k}^{w_1 l_1^2 + 2 l_1 l_2 + w_2 l_2^2}
			\prod_{i \in \{ 1, 2\}}
			\frac{1}{\zeta_{2k}^{l_i} - \zeta_{2k}^{-l_i}}
			\prod_{j \in \{ 2i+1, 2i+2 \}} \sum_{e_j \in \{ \pm 1 \} } e_j
			\sum_{l_j \in \Z/ w_j \Z}
			\zeta_{4k w_j}^{ -\left( 2k l_j + l_i + e_j \right)^2 }.
		\end{aligned}
	\end{equation}	
	Since $ \gcd(w_{2i+1}, w_{2i+2}) = 1 $ for $ i \in \{ 1, 2 \} $ by \cref{lem:S} \cref{item:lem:S3}, we have
	\begin{align}
		&\prod_{j \in \{ 2i+1, 2i+2 \}} \sum_{e_j \in \{ \pm 1 \} } e_j
		\sum_{l_j \in \Z/ w_j \Z}
		\zeta_{4k w_j}^{ -\left( 2k l_j + l_i + e_j \right)^2 } \\
		= \,
		&\sum_{l'_i \in \Z/ w_{2i+1} w_{2i+2} \Z}
		\prod_{j \in \{ 2i+1, 2i+2 \}} \sum_{e_j \in \{ \pm 1 \} } e_j
		\zeta_{4k w_j}^{ -\left( 2k l'_i + l_i + e_j \right)^2 }		
	\end{align}
	by the Chinese remainder theorem. 
	By replacing $ 2k l'_i + l_i $ as $ l_i $ and applying \cref{lem:S} \cref{item:lem:S4}, the equation in the second line in \cref{eq:WRT_Gauss_sum2} is equal to
	\begin{align}
		&\sum_{(l_1, l_2) \in (\Z \smallsetminus k\Z)^2 / 2k S(\Z)^2}
		\zeta_{4k}^{w_1 l_1^2 + 2 l_1 l_2 + w_2 l_2^2}
		\prod_{i \in \{ 1, 2\}}
		\frac{1}{\zeta_{2k}^{l_i} - \zeta_{2k}^{-l_i}}
		\prod_{j \in \{ 2i+1, 2i+2 \}} \sum_{e_j \in \{ \pm 1 \} } e_j
		\zeta_{4k w_j}^{ -\left( l_i + e_j \right)^2 }\\
		= \,
		&\sum_{(l_1, l_2) \in (\Z \smallsetminus k\Z)^2 / 2k S(\Z)^2}
		\bm{e} \left( 
		\frac{1}{4k} \left( 
		w_1 l_1^2 + 2 l_1 l_2 + w_2 l_2^2
		- \left( \frac{1}{w_3} + \frac{1}{w_4} \right) l_1
		- \left( \frac{1}{w_5} + \frac{1}{w_6} \right) l_2
		\right) \right)
		\\
		&\prod_{i \in \{ 1, 2\}}
		\frac{1}{\zeta_{2k}^{l_i} - \zeta_{2k}^{-l_i}}
		\prod_{j \in \{ 2i+1, 2i+2 \}} \sum_{e_j \in \{ \pm 1 \} } e_j
		\zeta_{4k w_j}^{ -2 e_j l_i - e_j^2 }.
	\end{align}
	Since it holds
	\[
	w_1 l_1^2 + 2 l_1 l_2 + w_2 l_2^2
	- \left( \frac{1}{w_3} + \frac{1}{w_4} \right) l_1
	- \left( \frac{1}{w_5} + \frac{1}{w_6} \right) l_2
	=
	-(l_1, l_2) S^{-1} \pmat{l_1 \\ l_2},
	\]
	we obtain the claim by the definition of $ G(q) $ and $ H(q) $. 
\end{proof}

We can prove \cref{prop:WRT_rep_Bernoulli} by \cref{prop:Gauss_sum_vanish,prop:WRT_first_calculation}. 

\begin{proof}[Proof of $ \cref{prop:WRT_rep_Bernoulli} $]
	Our proof is similar to a proof of \cite[Proposition 6.5]{MM}, which deals with the case when $ W $ is negative definite. 
	By \cref{rem:G(q)_calS_property} \cref{item:rem:G(q)_calS_expansion}, we have
	\begin{align}
		G(\zeta_{2Mk}^{m} e^{-t/2M})
		&=
		-\sum_{\alpha \in \calT} \chi(\alpha) \bm{e} \left( \frac{1}{k} m \alpha \right) e^{-t\alpha}
		\sum_{m' \in \Z_{\ge 0}} \bm{e} \left( \frac{1}{k} m m' \right) e^{-tm'}
		\\
		&=
		-\sum_{\alpha \in \calT} \chi(\alpha)
		\sum_{m' \in [k]} \bm{e} \left( \frac{1}{k} m (m' + \alpha) \right) e^{-t(m' + \alpha)}
		\sum_{m'' \in \Z_{\ge 0}} e^{-tkm''}
		\\
		&=
		-\sum_{\alpha \in \calT} \chi(\alpha)
		\sum_{m' \in [k]} \bm{e} \left( \frac{1}{k} m (m' + \alpha) \right) 
		\frac{e^{-t(m' + \alpha)}}{1 - e^{-kt}}.
	\end{align}
	Define the Bernoulli polynomials $ B_i(x) $ as
	\[
	\sum_{i=0}^{\infty} \frac{B_i(x)}{i!} t^i
	\coloneqq
	\frac{t e^{xt}}{e^t - 1}.
	\]
	Then, we can write
	\begin{align}
		G(\zeta_{2Mk}^{m} e^{-t/2M})
		&=
		\sum_{\alpha \in \calT} \chi(\alpha)
		\sum_{m' \in [k]} \bm{e} \left( \frac{1}{k} m (m' + \alpha) \right) 
		\sum_{i = -1}^\infty \frac{1}{(i+1)!} B_{i+1}\left( \frac{m' + \alpha}{k} \right) (-kt)^{i}.
	\end{align}
	Since $ m \in \Z \smallsetminus k\Z $, it holds
	\begin{equation} \label{eq:residue_vanish}
		\sum_{\alpha \in \calT} \chi(\alpha)
		\sum_{m' \in [k]} \bm{e} \left( \frac{1}{k} m (m' + \alpha) \right)
		=
		\sum_{\alpha \in \calT} \chi(\alpha)
		\bm{e} \left( \frac{1}{k} m \alpha \right)
		\sum_{m' \in \Z/k\Z} \bm{e} \left( \frac{1}{k} mm' \right)
		= 0.
	\end{equation}
	Here we remark that this also follows from the fact that the coefficient of $ t^{-1} $ in $ G(\zeta_{2Mk}^{m} e^{-t/2M}) $ vanishes since it is holomorphic at $ t = 0 $. 
	Thus, we obtain
	\[
	G(\zeta_{2Mk}^{m})
	=
	\sum_{\alpha \in \calT} \chi(\alpha)
	\sum_{m' \in [k]} \bm{e} \left( \frac{1}{k} m (m' + \alpha) \right) 
	B_{1} \left( \frac{m' + \alpha}{k} \right),
	\]
	Since $ B_1(x) = x - 1/2 $, we have
	\[
	G(\zeta_{2Mk}^{m})
	=
	\frac{1}{k}
	\sum_{\alpha \in \calT} \chi(\alpha)
	\sum_{m' \in [k]} \bm{e} \left( \frac{1}{k} m (m' + \alpha) \right) (m' + \alpha)
	\]
	by \cref{eq:residue_vanish}.
	Similarly, for $ n \in \Z \smallsetminus k\Z $ we have
	\begin{align}
		H(\zeta_{2Nk}^{n})
		=
		\frac{1}{k}
		\sum_{\beta \in \calU} \psi(\beta)
		\sum_{n' \in [k]} \bm{e} \left( \frac{1}{k} n (n' + \beta) \right) (n' + \beta).
	\end{align}
	Therefore we obtain
	\begin{align}
		&\sum_{l = {}^t\!(m, n) \in (\Z \smallsetminus k\Z)^2/2kS(\Z^2)}
		\bm{e} \left( -\frac{1}{4k} {}^t\!l S^{-1} l \right)
		G \left( \zeta_k^{m/2M} \right) H \left( \zeta_k^{n/2N} \right) \\
		= \,
		&\frac{1}{k^2}
		\sum_{l = {}^t\!(m, n) \in (\Z \smallsetminus k\Z)^2/2kS(\Z^2)}
		\bm{e} \left( -\frac{1}{4k} {}^t\!l S^{-1} l \right)
		\sum_{\gamma = {}^t\!(\alpha, \beta) \in \calS} \veps(\gamma)
		\sum_{l' = {}^t\!(m', n') \in [k]^2}
		\bm{e}\left( \frac{1}{k} {}^t\!(l' + \gamma) l \right)
		(m' + \alpha)(n' + \beta).
	\end{align}
	By replacing $ l $ as $ l + 2S(l' + \gamma) $, the right hand side in this equation is written as
	\begin{equation} \label{eq:WRT_calculation}
		\frac{1}{k^2} G(2kS)
		\sum_{\gamma = {}^t\!(\alpha, \beta) \in \calS} \veps(\gamma)
		\sum_{l' = {}^t\!(m', n') \in [k]^2}
		\bm{e}\left( \frac{1}{k} Q(l' + \gamma) \right)
		(m' + \alpha)(n' + \beta),
	\end{equation}
	where
	\[
	G(2kS) \coloneqq
	\sum_{l \in \Z^2/2kS(\Z^2)}
	\bm{e} \left( -\frac{1}{2} {}^t\!l (2kS)^{-1} l \right).
	\]
	By \cref{prop:reciprocity}, we have $ G(2kS) = 2k \bm{e} (-\sigma_S/8) \sqrt{\abs{MN}} $.
	By \cref{prop:Gauss_sum_vanish,rem:G(q)_calS_property} \cref{item:rem:G(q)_calS_zero,item:rem:G(q)_calS_zero_alpha}, \cref{eq:WRT_calculation} equals to
	\[
	\frac{2 \bm{e} (-\sigma_S/8) \sqrt{\abs{MN}}}{k}
	\sum_{\gamma \in \calS} \veps(\gamma)
	\sum_{l' = {}^t\!(m', n') \in [k]^2}
	\bm{e}\left( \frac{1}{k} Q(l' + \gamma) \right) m' n'.
	\]
	Thus, we obtain the first equality.
	
	Finally, we prove the last equality.
	If we have 
	\begin{equation}\label{eq:Gauss_sum_stability_-1}
		\begin{aligned}
			&\sum_{\gamma = {}^t\!(\alpha, \beta) \in \calS} 
			\veps(\gamma)
			\sum_{l = {}^t\!(m, n) \in [k]^2}
			\bm{e}\left( \frac{1}{k} Q(\alpha + m, \beta+n) \right) mn
			\\
			= \,
			&-\sum_{\gamma = {}^t\!(\alpha, \beta) \in \calS} 
			\veps(\gamma)
			\sum_{l = {}^t\!(m, n) \in [k]^2}
			\bm{e}\left( \frac{1}{k} Q(\alpha + m, -\beta-n) \right) mn,
		\end{aligned}
	\end{equation}
	then, since we have $ Q(m, n) = \varepsilon_\Gamma Q'(m, \sgn(Na) n) $ by \cref{lem:S} \cref{item:lem:S:sgn}, we obtain
	\begin{align}
		&\sum_{\gamma \in \calS} 
		\veps(\gamma)
		\sum_{l = {}^t\!(m, n) \in [k]^2}
		\bm{e}\left( \frac{1}{k} Q(\gamma + l) \right) mn
		\\
		= \,
		&\sgn(Ma)
		\sum_{\gamma \in \calS} 
		\veps(\gamma)
		\sum_{l = {}^t\!(m, n) \in [k]^2}
		\bm{e}\left( \frac{\varepsilon_\Gamma}{k} Q'(\gamma + l) \right) mn
	\end{align}
	and the last equality.
	We prove \cref{eq:Gauss_sum_stability_-1}.
	We have
	\begin{align}
		&\sum_{\gamma \in \calS} 
		\veps(\gamma)
		\sum_{l = {}^t\!(m, n) \in [k]^2}
		\bm{e}\left( \frac{1}{k} Q(\gamma + l) \right) mn
		\\
		= \,
		&\sum_{\gamma = {}^t\!(\alpha, \beta) \in \calS} 
		\veps(\gamma)
		\sum_{l = {}^t\!(m, n) \in [k]^2}
		\bm{e}\left( \frac{1}{k} Q(\alpha + m, \beta + k-1-n) \right) m(k-1-n)
	\end{align}
	by replacing $ n $ by $ k-1-n $.
	By \cref{lem:S} \cref{item:lem:S:mod_k}, this equals to
	\[
	\sum_{\gamma = {}^t\!(\alpha, \beta) \in \calS} 
	\veps(\gamma)
	\sum_{l = {}^t\!(m, n) \in [k]^2}
	\bm{e}\left( \frac{1}{k} Q(\alpha + m, \beta -1-n) \right) m(k-1-n).
	\]
	By \cref{prop:Gauss_sum_vanish,rem:G(q)_calS_property} \cref{item:rem:G(q)_calS_zero}, this equals to
	\[
	-\sum_{\gamma = {}^t\!(\alpha, \beta) \in \calS} 
	\veps(\gamma)
	\sum_{l = {}^t\!(m, n) \in [k]^2}
	\bm{e}\left( \frac{1}{k} Q(\alpha + m, \beta -1-n) \right) mn.
	\]
	For $ \beta \in \calU $, we have $ 1-\beta \in \calU $ and $ \psi(1-\beta) = \psi(\beta) $ by the definitions of $ \calU $ and $ \psi $.
	Thus, the above expression equals to
	\[
	-\sum_{\gamma = {}^t\!(\alpha, \beta) \in \calS} 
	\veps(\gamma)
	\sum_{l = {}^t\!(m, n) \in [k]^2}
	\bm{e}\left( \frac{1}{k} Q(\alpha + m, -\beta-n) \right) mn.
	\]
	Therefore we obtain \cref{eq:Gauss_sum_stability_-1}.
\end{proof}


\section{An asymptotic formula} \label{sec:asymptotic_formula}


In this section, we prepare an asymptotic formula which connects weighted Gauss sums and radial limits of infinite series.
We use the following notation for asymptotic expansions by Poincar\'{e}. 

\begin{dfn}[Poincar\'{e}]
	Let $ L $ be a positive number, $ f \colon \R_{>0} \to \bbC $ be maps, $ t $ be a variable of $ \R_{>0} $, and $ (a_n)_{n \ge -L} $ be a family of complex numbers.
	Then, we write
	\[
	f(t) \sim \sum_{n \ge -L} a_n t^{n} \text{ as } t \to +0
	\]
	if for any positive number $ M $ there exist positive numbers $ K_M $ and $ \varepsilon $ such that
	\[
	\abs{ f(t) - \sum_{-L \le n \le M} a_n t^{n} }
	\le K_M \abs{ t^{M+1}}
	\]
	for any $ 0 < t < \varepsilon $.
	In this case, we call the infinite series $ \sum_{n \ge -L} a_n t^{n} $ as the \textbf{asymptotic expansion} of  $ f(t) $ as $ t \to +0 $. 
\end{dfn}

Our asymptotic formula is based on the following asymptotic formula, which follows from the Euler--Maclaurin summation formula. 

\begin{lem}[{\cite[Equation (44)]{Zagier_asymptotic}, \cite[Equation (2.8)]{BKM}, \cite[Lemma 2.2]{BMM_high_depth}}]
	\label{lem:Euler-Maclaurin}
	Let $ \alpha, \beta \in \R$ be vectors and $ f \colon \R^2 \to \bbC $ be a Schwartz function.	
	Then, for a variable $ t \in \R_{>0} $, an asymptotic expansion as $ t \to +0 $
	\[
	\sum_{m, n = 0}^{\infty} f(t(m+\alpha, n+\beta))
	\sim
	\sum_{i, j = -1}^{\infty} \frac{B_{i+1}(\alpha)}{(i+1)!} \frac{B_{j+1}(\beta)}{(j+1)!}
	\frac{\partial^{i+j} f}{\partial x^i \partial y^j} (0, 0) t^{i+j}
	\]
	holds, where $ B_i(x) $ is the $ i $-th Bernoulli polynomial, and
	\begin{align}
		\frac{\partial^{i+j} }{\partial x^i \partial y^j} f \coloneqq \frac{\partial^{i}}{\partial x^i} \frac{\partial^{j}}{\partial y^j} f, \quad
		\frac{\partial^{-1} f}{\partial x^{-1}} (x', y) \coloneqq -\int_{x'}^\infty f(0, y) dx.
	\end{align}
\end{lem}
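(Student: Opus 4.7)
The plan is to iterate the one-variable Euler--Maclaurin summation formula in the two directions $m$ and $n$. The one-dimensional version reads: for a Schwartz function $g \colon \R \to \bbC$ and $\alpha \in \R$,
\[
\sum_{m=0}^{\infty} g(t(m+\alpha))
\sim
\sum_{i=-1}^{\infty} \frac{B_{i+1}(\alpha)}{(i+1)!} g^{(i)}(0) \, t^{i} \quad \text{as } t \to +0,
\]
with the convention $g^{(-1)}(0) := -\int_0^{\infty} g(x)\,dx$ for the leading $t^{-1}$ contribution. This can be established from the classical Euler--Maclaurin remainder formula applied to $g \circ (t\cdot)$: the truncation error at order $R$ is an integral of $g^{(R)}$ against a periodic Bernoulli polynomial, hence is $O(t^R)$ with an implicit constant depending only on Schwartz seminorms of $g$.

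First, for each fixed $y$, apply this expansion to the Schwartz function $x \mapsto f(x, y)$:
\[
\sum_{m=0}^{\infty} f(t(m+\alpha), y)
\sim
\sum_{i=-1}^{\infty} \frac{B_{i+1}(\alpha)}{(i+1)!} \frac{\partial^{i} f}{\partial x^{i}}(0, y) \, t^{i}.
\]
Since $f$ is Schwartz on $\R^2$, the remainder after truncation at $i \le R$ can be bounded uniformly in $y$ by $C_R\, t^R\, h(y)$ for some Schwartz function $h$. Substitute $y = t(n+\beta)$ and sum over $n \ge 0$; swapping the now finite $i$-sum with the $n$-sum is legal, and the remainder contributes at most $C_R\, t^R \sum_n h(t(n+\beta)) = O(t^{R-1})$, which is still $O(t^{R'})$ for any chosen $R'$ after taking $R$ large enough.

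Finally, for each $i \ge -1$, apply the one-variable formula to the function $y \mapsto \frac{\partial^i f}{\partial x^i}(0, y)$, which is itself Schwartz:
\[
\sum_{n=0}^{\infty} \frac{\partial^i f}{\partial x^i}(0, t(n+\beta))
\sim
\sum_{j=-1}^{\infty} \frac{B_{j+1}(\beta)}{(j+1)!} \frac{\partial^{i+j} f}{\partial x^i \partial y^j}(0, 0)\, t^{j}.
\]
Multiplying by $\frac{B_{i+1}(\alpha)}{(i+1)!}\, t^i$ and summing over $i \ge -1$ yields the claimed double expansion. The main obstacle will be to verify, in the $i = -1$ case, that $y \mapsto -\int_0^\infty f(x, y)\, dx$ is Schwartz in $y$, so that the second application of Euler--Maclaurin is valid; this follows from differentiation under the integral sign together with the rapid decay of $\partial_y^k f(x, y)$ uniformly in $x$. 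A similar check ensures the Schwartz property of $\frac{\partial^i f}{\partial x^i}(0, \cdot)$ for $i \ge 0$, after which careful bookkeeping of uniform remainder estimates at each step delivers the asymptotic expansion to any desired order in $t$.
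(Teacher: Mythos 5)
The paper offers no proof of this lemma --- it is imported verbatim from Zagier, Bringmann--Kaszian--Milas, and Bringmann--Mahlburg--Milas --- and your strategy of iterating the one-variable Euler--Maclaurin formula, with Schwartz-seminorm control of the truncation error uniformly in the second variable, is exactly how those references obtain the two-variable version. The structural points you isolate (uniformity of the remainder in $y$ before substituting $y=t(n+\beta)$, the Schwartz property of $\partial_x^i f(0,\cdot)$ for $i\ge 0$ and of $y\mapsto -\int_0^\infty f(x,y)\,dx$, and the loss of one power of $t$ when summing the remainder over $n$) are indeed the only things that need checking, and your treatment of them is correct.

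There is, however, a sign error in your one-variable input. With the antiderivative convention $g^{(-1)}(0):=-\int_0^\infty g(x)\,dx$, the correct expansion is
\[
\sum_{m=0}^\infty g(t(m+\alpha))
\sim
-\sum_{i=-1}^\infty\frac{B_{i+1}(\alpha)}{(i+1)!}\,g^{(i)}(0)\,t^i
=
\frac{1}{t}\int_0^\infty g(x)\,dx-\sum_{i=0}^\infty\frac{B_{i+1}(\alpha)}{(i+1)!}\,g^{(i)}(0)\,t^i,
\]
i.e.\ there is a global minus sign that your version omits. To see this, take $g(x)=e^{-x}$ and $\alpha=0$: then $\sum_{m\ge 0}e^{-tm}=\frac{1}{1-e^{-t}}=\frac{1}{t}+\frac{1}{2}+\frac{t}{12}+\cdots$, whereas your formula predicts $-\frac{1}{t}-\frac{1}{2}-\frac{t}{12}-\cdots$; already the leading Riemann-sum term must be $+\frac{1}{t}\int_0^\infty g$, not $-\frac{1}{t}\int_0^\infty g$. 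Because you apply the one-variable formula twice, once in $x$ and once in $y$, the two spurious factors of $-1$ cancel and you land on the correct two-variable statement --- this cancellation is precisely why the lemma as quoted in the paper carries no overall sign --- but as written your intermediate step is false and the derivation succeeds only by accident. Correcting the one-variable formula as displayed above, the rest of your argument goes through unchanged.
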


Finally, we state the main result in this section.
In the following statement, we use notations $ Q', \varepsilon, \calW $ and so on, which we prepared in \cref{sec:fund_data}.

\begin{prop} \label{prop:infin_series_asymptotic}
	Let $ k \in \Z \smallsetminus \{ 0 \} $ and $ f \colon \R^2 \to \bbC $ be a Schwartz function such that $ f(0) = 1 $.
	\begin{align}
		\lim_{t \to +0} 
		\sum_{\gamma \in \calS} 
		\veps(\gamma)
		\sum_{l \in \Z_{\ge 0}^2}
		\bm{e} \left( \frac{1}{k} Q'(\gamma + l) \right)
		f(t(\gamma + l))
		= \,
		&\frac{1}{k^2}
		\sum_{\gamma \in \calS} 
		\veps(\gamma)
		\sum_{l = {}^t\!(m, n) \in [k]^2}
		\bm{e}\left( \frac{1}{k} Q'(\gamma + l) \right) mn.
	\end{align}
\end{prop}

\begin{proof}[Proof of $ \cref{prop:infin_series_asymptotic} $]
	By \cref{lem:S} \cref{item:lem:S:mod_k} and \cref{lem:Euler-Maclaurin}, we have
	\begin{align}
		&\sum_{\gamma \in \calS} 
		\veps(\gamma)
		\sum_{l \in \Z_{\ge 0}^2}
		\bm{e} \left( \frac{1}{k} Q'(\gamma + l) \right)
		f(t(\gamma + l))
		\\
		= \,
		&\sum_{\gamma \in \calS} 
		\veps(\gamma)
		\sum_{l \in [k]^2}
		\bm{e} \left( \frac{1}{k} Q'(\gamma + l) \right)
		\sum_{l' \in \Z_{\ge 0}^2}
		f(t(\gamma + l + kl'))
		\\
		\sim \,
		&\sum_{\gamma \in \calS} 
		\veps(\gamma)
		\sum_{l \in [k]^2}
		\bm{e} \left( \frac{1}{k} Q'(\gamma + l) \right)
		\sum_{i, j = -1}^{\infty} \frac{B_{i+1}(\alpha/k)}{(i+1)!} \frac{B_{j+1}(\beta/k)}{(j+1)!}
		\frac{\partial^{i+j} f}{\partial x^i \partial y^j} (0, 0) (kt)^{i+j}.
	\end{align}
	By \cref{prop:Gauss_sum_vanish}, the terms in the last sum vanishes if $ i = -1 $ or $ j = -1 $ since $ B_0(x) = 1 $ is constant.
	Thus, the limit along $ t \to +0 $ converges and equals to
	\[
	\sum_{\gamma = \transpose{(\alpha, \beta)} \in \calS} 
	\veps(\gamma)
	\sum_{l = \transpose{(m, n)} \in [k]^2}
	\bm{e} \left( \frac{1}{k} Q'(\gamma + l) \right)
	B_{1} \left( \frac{\alpha + m}{k} \right) B_{1} \left( \frac{\beta + n}{k} \right).
	\]
	Since $ B_1(x) = x - 1/2 $, this equals to
	\[
	\frac{1}{k^2} \sum_{\gamma \in \calS} 
	\veps(\gamma)
	\sum_{l = \transpose{(m, n)} \in [k]^2}
	\bm{e} \left( \frac{1}{k} Q'(\gamma + l) \right) mn
	\]
	by \cref{prop:Gauss_sum_vanish,rem:G(q)_calS_property} \cref{item:rem:G(q)_calS_zero,item:rem:G(q)_calS_zero_alpha}.
\end{proof}


\section{Zwegers' indefinite theta functions} \label{sec:Zwegers_theta}


In \cref{sec:WRT}, we represent WRT invariants as weighted Gauss sums for a quadratic form $ Q(m, n) $.
If $ Q(m, n) $ is positive definite, then weighted Gauss sums can be represented as radial limits of false theta functions by \cite[Proposition 5.3]{MM}.
When $ Q(m, n) $ is indefinite, Zwegers' indefinite theta functions are the first candidates for such functions.
In this section, we consider some kind of indefinite theta functions which arise from our indefinite H-graphs and which are similar but slightly different from Zwegers' indefinite theta functions.
We prove that its radial limits are not equal to weighted Gauss sums but $ 0 $.

Zwegers' indefinite theta functions are defined as follows.

\begin{dfn}[{\cite[Equation 8.23]{BFOR}, Zwegers~\cite[Section 2.2]{Zwegers_thesis}}]
	\label{dfn:Zwegers_theta}
	Let $ S \in \Sym_2(\Z) $ be an indefinite symmetric matrix.
	We denote $ Q(l) \coloneqq {}^t\!l S l $ for $ l \in \R^2 $.
	Let $ \lambda, \lambda' \in \R^2 $ be two vectors such that $ Q(\lambda), Q(\lambda'), {}^t\!\lambda S \lambda' < 0 $.
	Let $ \tau $ be a complex variable with a positive imaginary part and $ q \coloneqq e^{2\pi\iu\tau} $.
	Let $ \gamma, \delta \in \R^2 $ and put $ z = \tau \gamma + \delta \in \bbC^2 $.
	Then, Zwegers' indefinite theta functions is the series
	\begin{align}
		\vartheta_{S, \lambda, \lambda'} \left( z; \tau \right)
		&\coloneqq
		\sum_{l \in \Z^2}
		\left(\sgn_0 \left( {}^t\!\lambda S (\gamma + l) \right) - \sgn_0( {}^t\!\lambda' S (\gamma + l) ) \right)
		\bm{e} \left( {}^t\!z S l \right) q^{Q(l)/2}
		\\
		&=
		\bm{e} \left( -{}^t\!\gamma S \delta \right) q^{-Q(\gamma)/2}		
		\sum_{l \in \gamma + \Z^2}
		\left(\sgn_0( {}^t\!\lambda S l ) - \sgn_0( {}^t\!\lambda' S l ) \right)
		\bm{e} \left( {}^t\!\delta S l \right) q^{Q(l)/2}.
	\end{align}
\end{dfn}

This series has the following properties.

\begin{thm}[{\cite[Theorem 8.26]{BFOR}, Zwegers~\cite[Proposition 2.4]{Zwegers_thesis}}]
	Zwegers' indefinite theta functions converge.
\end{thm}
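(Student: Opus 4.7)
The plan is to prove absolute convergence of the series. The only obstacle is that $Q$ is indefinite, so $|q^{Q(l)}|$ is unbounded on half of $\Z^2$; one must check that the coefficient $\sgn_0({}^t\!\lambda S(\gamma+l)) - \sgn_0({}^t\!\lambda' S(\gamma+l))$ vanishes precisely on the dangerous region.

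First, I would describe the support. This coefficient is nonzero exactly when the two signs disagree, so the support (as a subset of $\R^2$ via $l \mapsto \gamma + l$) is contained in the closed double wedge bounded by the $S$-orthogonal lines $\lambda^\perp := \{x \in \R^2 : {}^t\!\lambda S x = 0\}$ and $(\lambda')^\perp$, with the origin excluded. Next, I would establish the key geometric fact that this support lies in the positive cone of $Q$. Since $S$ has signature $(1,1)$ and $Q(\lambda) < 0$, the one-dimensional subspace $\lambda^\perp$ is spacelike, meaning $Q > 0$ on $\lambda^\perp \setminus \{0\}$; the same holds for $(\lambda')^\perp$. The hypothesis ${}^t\!\lambda S \lambda' < 0$ forces $\lambda$ and $\lambda'$ to lie in the same connected component of $\{x : Q(x) < 0\}$, and by $S$-orthogonality the lines $\lambda^\perp$ and $(\lambda')^\perp$ then lie in the common positive cone between them. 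The closed wedge between these two lines is therefore contained in $\{x : Q(x) \ge 0\}$, and by homogeneity there exists $c = c(\lambda, \lambda') > 0$ with $Q(x) \ge c |x|^2$ for all $x$ in the support.

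With this in hand, absolute convergence follows from a Gaussian majorant. Each summand is bounded by
\[
\left| \left(\sgn_0({}^t\!\lambda S(\gamma+l)) - \sgn_0({}^t\!\lambda' S(\gamma+l))\right) \bm{e}({}^t\!z S l) q^{Q(l)} \right| \le 2 \, e^{2\pi | {}^t\!\Im(z) S l |} |q|^{Q(l)},
\]
and on the support one has $Q(l) \ge c |\gamma + l|^2 - C \ge (c/2) |l|^2 - C'$ for $|l|$ large, using $|l| \le |\gamma + l| + |\gamma|$. Since $|q|^{Q(l)} = e^{-2\pi \Im(\tau) Q(l)}$ with $\Im(\tau) > 0$, the Gaussian decay in $|l|$ dominates the at-most-exponential prefactor, so the series is majorized by a convergent two-dimensional Gaussian sum.

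The main obstacle is the geometric step: translating the three strict inequalities on $\lambda$ and $\lambda'$ into the statement that the support of the sign factor avoids the negative cone of $Q$. Once this is in place, convergence is routine. A minor technical subtlety is that on the two bounding lines themselves the sign factor equals $\pm 1$ instead of $\pm 2$, but these boundary contributions are dominated by the same majorant.
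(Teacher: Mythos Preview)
The paper does not supply its own proof of this statement; it simply records the convergence theorem with citations to \cite[Theorem 8.26]{BFOR} and Zwegers~\cite[Proposition 2.4]{Zwegers_thesis}. Your proposal is a correct sketch of precisely the argument found in those references: one shows that the closed double wedge on which the sign factor is nonzero avoids the timelike cone $\{Q<0\}$, obtains a uniform lower bound $Q(x)\ge c\,|x|^2$ on that wedge by compactness of its intersection with the unit circle, and then dominates the series by a Gaussian. The cleanest way to organise the geometric step is the one you implicitly use: since $\lambda^\perp$ and $(\lambda')^\perp$ are spacelike and do not meet the timelike cone, each connected component of $\{Q<0\}$ lies entirely in one of the four sectors cut out by $\lambda^\perp\cup(\lambda')^\perp$; evaluating the sign factor at $\lambda$ and $\lambda'$ shows both timelike components sit in the non-support sectors, so the support sectors are contained in $\{Q\ge 0\}$. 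One small remark: it is slightly tidier to run the estimate on the second form of the series (summing over $l\in\gamma+\Z^2$ with exponent $Q(l)$ and sign factor depending on the same $l$), which avoids the shift from $Q(\gamma+l)$ back to $Q(l)$ that you handle at the end.
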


\begin{thm}[{\cite[Theorem 8.30]{BFOR}}]
	Under the settings in \cref{dfn:Zwegers_theta}, assume $ \lambda, \lambda' \in \Z^2 $, and they have coprime coordinates. 
	Then, $ \vartheta_{S, \lambda, \lambda'} \left( z; \tau \right) $ is a mixed mock modular form $ ( $defined in \cite[Definition 13.1]{BFOR}$ ) $ of weight $ 1 $ for some congruence subgroup.
\end{thm}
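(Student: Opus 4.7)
The plan is to follow Zwegers' classical recipe: build a non-holomorphic completion $\widehat{\vartheta}_{S,\lambda,\lambda'}(z;\tau)$ whose modular behaviour is transparent, then differentiate it in $\bar{\tau}$ to read off the shadow and verify the defining condition of mixed mock modularity.

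First I would replace each factor $\sgn_0({}^t\!\lambda S l)$ in the defining series by $E\!\left(\frac{{}^t\!\lambda S l \sqrt{2 \operatorname{Im}(\tau)}}{\sqrt{-Q(\lambda)}}\right)$, where $E(x) := 2\int_{0}^{x} e^{-\pi u^{2}}\,du$, and likewise for $\lambda'$. Since $E(x)-\sgn(x)$ decays like $e^{-\pi x^{2}}$, the resulting completion $\widehat{\vartheta}_{S,\lambda,\lambda'}(z;\tau)$ converges absolutely (the Gaussian dominates the oscillating factor $q^{Q(l)}$ in every direction, including those where $Q$ is negative), and it recovers $\vartheta_{S,\lambda,\lambda'}$ in the limit $\operatorname{Im}(\tau)\to\infty$.

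Next I would establish modular transformation. Invariance under $\tau \mapsto \tau+1$ is immediate since $Q(l) \in \tfrac12 \Z + Q(\gamma)$ for $l \in \gamma+\Z^{2}$. For $\tau \mapsto -1/\tau$, I would apply Poisson summation to the completed series; the error-function weights interact cleanly with the Gaussian Fourier kernel, and the resulting transformation is that of a weight-$1$ non-holomorphic Jacobi form. The coprimality of the coordinates of $\lambda$ and $\lambda'$ guarantees that each of them extends to a $\Z$-basis of $\Z^{2}$, so the relevant lattice denominators stay bounded and the multiplier system has finite order; consequently $\widehat{\vartheta}_{S,\lambda,\lambda'}$ transforms on some congruence subgroup $\Gamma \subset \mathrm{SL}_{2}(\Z)$.

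Finally I would compute the shadow. Applying $\partial_{\bar{\tau}}$ only hits the two $E$-arguments, and $E'(x) = 2e^{-\pi x^{2}}$ turns each summand into a Gaussian concentrated along the line $\R\lambda$ (resp.\ $\R\lambda'$) times the remaining $q$-contribution orthogonal to it. Splitting the sum according to the decomposition $\Z^{2} = \Z\lambda \oplus (\lambda^{\perp} \cap \Z^{2})$ (resp.\ for $\lambda'$), the result factors as a difference of two products of weight-$\tfrac12$ unary theta series — precisely the shape required by \cite[Definition 13.1]{BFOR} for a mixed mock modular form of weight $1$.

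The main obstacle is the second step: tracking how the characteristic $(\gamma,\delta)$, the lattice $S(\Z^{2})$, and the vectors $\lambda,\lambda'$ interact under $\tau \mapsto -1/\tau$ via Poisson summation, and pinning down the explicit level of the resulting congruence subgroup. The coprimality hypothesis is essential here — without it, $\lambda$ or $\lambda'$ would only span a sublattice of $\Z^{2}$ of index $>1$, forcing denominators that obstruct a congruence-subgroup statement and leaving only a multiplier system on the full metaplectic cover.
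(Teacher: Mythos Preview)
The paper does not prove this theorem at all: it is stated purely as a citation of \cite[Theorem 8.30]{BFOR}, with no argument given. Your sketch is therefore not comparable to anything in the paper --- the author simply imports the result as a black box to motivate why Zwegers' indefinite theta functions are natural candidates before showing (in Remark~6.4) that their radial limits nevertheless vanish.

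That said, your outline is essentially the standard Zwegers argument from \cite{Zwegers_thesis} as presented in \cite{BFOR}: complete with the error function $E$, verify modularity of the completion via Poisson summation, and compute $\partial_{\bar\tau}$ to exhibit the shadow as a combination of unary theta products. If you intend to actually write this out rather than cite it, the step that needs the most care is the Poisson summation for $\tau\mapsto -1/\tau$: you must first rewrite the $E$-weighted sum as an honest Gaussian integral over $\R^2$ (Zwegers does this by expressing $E$ as an integral and applying Fubini), since Poisson summation does not apply directly to a sum with non-smooth or slowly decaying weights. Your description of the shadow computation is also slightly imprecise: the orthogonal splitting $\Z^2 = \Z\lambda \oplus (\lambda^\perp \cap \Z^2)$ need not hold integrally even when $\lambda$ has coprime coordinates; what one actually uses is a rational splitting together with a finite sum over cosets, which is where the congruence level enters.
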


For indefinite symmetric matrices $ S $ associated to indefinite H-graphs, we can calculate the radial limits of Zwegers' indefinite theta functions as follows.

\begin{rem} \label{rem:Zwegers_lim}
	As in \cref{sec:fund_data}, let $ Q' $ be the quadratic form associated to an H-graph $ \Gamma $.
	Assume that $ Q' $ is indefinite.
	For $ \gamma \in \R^2 $, define indefinite theta function as
	\begin{align} 
		\vartheta_{Q', \gamma} \left( \tau \right)
		&\coloneqq
		\sum_{l = \transpose{(m, n)} \in \Z^2} \left( \sgn_0(m) + \sgn_0(n) \right) q^{Q'(\gamma + l)}
		\\
		&=
		\left(
		2 \sum_{l \in \Z_{\ge 1}^2}
		-
		2 \sum_{l \in \Z_{\le -1}^2}
		+
		\sum_{l \in \{ 0 \} \times \Z_{\ge 1}}
		+
		\sum_{l \in \Z_{\ge 1} \times \{ 0 \}}
		-
		\sum_{l \in \{ 0 \} \times \Z_{\le -1}}
		-
		\sum_{l \in \Z_{\le -1} \times \{ 0 \}}
		\right)
		q^{Q'(\gamma + l)}.
	\end{align}
	Thus, for any positive integer $ k $, it holds 
	\[
	\lim_{\tau \to 1/k} \vartheta_{Q', \gamma} \left( \tau \right) \left( \tau \gamma; \tau \right)
	= 0
	\]
	by \cref{prop:infin_series_asymptotic}.
\end{rem}

Thus, we cannot construct $ q $-series whose radial limits are WRT invariants of indefinite H-graphs in the same manner as  Zwegers' indefinite theta functions.
So we introduce indefinite false theta functions in the next section.



\section{False indefinite theta functions whose radial limits are WRT invariants} \label{sec:infinite_series}


At last, we define false indefinite theta functions whose radial limits are WRT invariants.
We use notations an H-graph $ \Gamma $, a symmetric matrix $ S $, a quadratic form $ Q'(m, n) $, a finite set $ \calS $ and so on, which are prepared in \cref{sec:fund_data}.
We assume that $ S $ is indefinite. 

Under these notations, we define indefinite false theta functions associated with indefinite H-graphs $ \Gamma $ as follows.

\begin{dfn} \label{dfn:HB}
	For a real number $ x $ and a complex number $ q $ with $ \abs{q} < 1 $, define
	\[
	\widehat{Z}_{\Gamma} (q)
	\coloneqq 
	q^{\left( 3\sigma_W - \sum_{i=1}^{6}w_{i} - \sum_{i=3}^{6} 1/w_{i} \right)/4} \bm{e}(\sigma'_W/8) \sgn(Na)
	\sum_{\gamma \in \calS} 
	\veps(\gamma)
	\sum_{l \in \Z_{\ge 0}^2} q^{Q'(\gamma + l)},
	\]
	where
	\[
	\sigma'_W
	\coloneqq
	-\sigma_W  - \sigma_S + \sgn(w_3) + \sgn(w_4) +\sgn(w_5) + \sgn(w_6)
	\]
	as in $ \cref{prop:WRT_rep_Bernoulli} $.
\end{dfn}

Here we remark that $ \sigma_S = 0 $ since we assume that $ S $ is indefinite.

\begin{rem} \label{rem:HB_expression}
	If $ w_3, w_4, w_5, w_6 \notin \{ \pm 1 \} $, then it holds $ \calS \subset \left[ 0, 1 \right)^2 $
	by \cref{rem:G(q)_calS_property} \cref{item:rem:G(q)_calS_rep}.
	Thus, in this case, we can write
	\begin{align}
		\widehat{Z}_{\Gamma} (q)
		=
		q^{\left( 3\sigma_W - \sum_{i=1}^{6}w_{i} - \sum_{i=3}^{6} 1/w_{i} \right)/4}
		\bm{e}(\sigma'_W/8) \sgn(Na)
		\sum_{\gamma \in \calS + \Z_{\ge 0}^2} 
		\veps(\gamma) q^{Q'(\gamma)}.
	\end{align}
	Here we remark that if $ w_i \in \{ \pm 1 \} $ for some $ 3 \le i \le 6 $, then $ M(\Gamma) $ is a Seifert manifold by \cref{rem:Neumann}.
\end{rem}

Our main result is the following statement. 

\begin{thm} \label{thm:WRT_HB}
	Under the above settings, it holds
	\[
	\lim_{q \to \zeta_k^{\varepsilon_\Gamma}} \widehat{Z}_{\Gamma} \left( q \right)
	=
	\frac{1}{2 \left(\zeta_{2k} - \zeta_{2k}^{-1} \right)} \WRT_k(M(\Gamma)).
	\]
\end{thm}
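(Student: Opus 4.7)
The plan is to insert the expansion of $\widetilde{\vartheta}_{2S,\lambda,\lambda'}$ from \cref{rem:indef_false_lim} into the defining formula for $\widehat{Z}_\Gamma(1/k;q)$ in \cref{dfn:HB}, take the radial limit $q \to \zeta_k$ (equivalently $\tau \to 1/k$ from the upper half plane) using the Euler--Maclaurin-based asymptotic of \cref{prop:infin_series_asymptotic}, and match the output against \cref{prop:WRT_rep_Bernoulli}.

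Concretely, for each $\gamma \in \calS$ I would apply \cref{rem:indef_false_lim} with $\delta = \floor{\gamma}/k$. The $q^{-Q(\gamma - \floor{\gamma})}$ prefactor on the right-hand side of that remark cancels the corresponding $q^{Q(\gamma - \floor{\gamma})}$ factor in \cref{dfn:HB}, and the $\bm{e}(Q(\floor{\gamma})/k)$ and $\bm{e}((2/k){}^t\!\floor{\gamma}Sl)$ factors recombine (after an obvious reindexing) into a clean expression whose summand at a lattice point $l' \in \gamma + (\text{region})$ is $\bm{e}(Q(l')/k)\,e^{-tQ(l')}$ after writing $q = \zeta_k e^{-t}$ with $t \to 0^+$; in particular $\floor{\gamma}$ is absorbed and only the quadratic exponent $Q(l')$ remains.

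By \cref{rem:indef_false_lim}, the resulting sum splits into twice a ``positive-quadrant'' sum $\Sigma_+$ over $\gamma + \Z_{>0}\widetilde{\lambda} \oplus \Z_{>0}\widetilde{\lambda'}$, twice a ``negative-quadrant'' sum $\Sigma_-$ over $\gamma + \Z_{<0}\widetilde{\lambda} \oplus \Z_{<0}\widetilde{\lambda'}$, and a one-dimensional ``axial'' sum $\Sigma_0$ along $\gamma + (\Z\smallsetminus\{0\})\widetilde{\lambda}$. For the quadrant sums the Schwartz function $f(v) = e^{-Q(v)}$ decays rapidly on the two quadrants by the hypothesis $Q(\lambda),Q(\lambda'),{}^t\!\lambda S \lambda' < 0$, so \cref{prop:infin_series_asymptotic} applies with bases $(\widetilde{\lambda},\widetilde{\lambda'})$ and $(-\widetilde{\lambda},-\widetilde{\lambda'})$ respectively. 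Since $\perm(-\widetilde{\lambda},-\widetilde{\lambda'}) = \perm(\widetilde{\lambda},\widetilde{\lambda'})$, the two contributions coincide; combining them with the multiplicity factor $4$, the normalization $1/(4\perm(\widetilde{\lambda},\widetilde{\lambda'}))$ from \cref{dfn:HB}, and the prefactor $q^{\ldots/4}\bm{e}(\sigma'_W/8)|_{q=\zeta_k}$, one recovers exactly $2(\zeta_{2k}-\zeta_{2k}^{-1})$ times the expression for $\WRT_k(M(\Gamma))$ in \cref{prop:WRT_rep_Bernoulli}, which is the desired identity modulo the axial contribution.

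The main obstacle is then to show that $\Sigma_0$ contributes $0$ in the radial limit, since \cref{prop:infin_series_asymptotic} only handles genuinely two-dimensional sums. The strategy is to apply one-dimensional Euler--Maclaurin to $\Sigma_0$ in the variable $m$ parametrizing $l = m\widetilde{\lambda}$; each coefficient of $t^i$ in the resulting expansion is, after the outer summation in $\gamma = {}^t\!(\alpha,\beta) \in \calS$ weighted by $\veps(\gamma) = \chi(\alpha)\psi(\beta)$, a one-dimensional Gauss-type sum whose dependence on $\alpha$ (respectively $\beta$) enters only through the residues $M\alpha, M\alpha^2 \bmod \Z$ (respectively $N\beta, N\beta^2 \bmod \Z$) by \cref{lem:Gauss_sum_indep}. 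The character-sum identities $\sum_{\alpha \in \calT}\chi(\alpha) = 0$ and $\sum_{\beta \in \calU}\psi(\beta) = 0$ from \cref{rem:G(q)_calS_property} \cref{item:rem:G(q)_calS_zero} then force $\lim_{t \to 0^+}\Sigma_0 = 0$ by exactly the mechanism underlying \cref{prop:Gauss_sum_vanish}. Combining the quadrant limits with this vanishing concludes the proof.
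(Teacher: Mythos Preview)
Your approach is essentially the paper's: expand $\widehat{Z}_\Gamma(1/k;q)$ via \cref{rem:indef_false_lim}, reindex by $\floor{\gamma}$, apply the Euler--Maclaurin asymptotic \cref{prop:infin_series_asymptotic} to the quadrant pieces, invoke \cref{prop:Gauss_sum_vanish} for the remaining vanishings, and match against \cref{prop:WRT_rep_Bernoulli}. The paper compresses your steps (3)--(4) into a single line ``by \cref{prop:Gauss_sum_vanish,prop:infin_series_asymptotic}'', but the mechanism is identical.

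One slip to correct: after the shift $l' = l + \floor{\gamma}$, only the phase $\bm{e}(\cdot/k)$ absorbs $\floor{\gamma}$; the decay factor becomes $e^{-tQ(l'-\floor{\gamma})}$, not $e^{-tQ(l')}$. This is harmless because \cref{prop:infin_series_asymptotic} allows an arbitrary shift $\delta$ in the Schwartz argument and the answer is independent of $\delta$, but your sentence ``$\floor{\gamma}$ is absorbed and only the quadratic exponent $Q(l')$ remains'' is literally false for the $q$-power part. A cleaner bookkeeping device (implicitly used in the paper) is to fold the axial piece into the quadrant sums via
\[
2\sum_{m,n\ge 1}+\sum_{m\ge 1,\,n=0}=\sum_{m,n\ge 1}+\sum_{m\ge 1,\,n\ge 0},
\]
and similarly for the negative quadrant, so that one is left with four genuine two-dimensional quadrant sums to which \cref{prop:infin_series_asymptotic} applies directly; this avoids having to argue the one-dimensional vanishing separately.
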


\begin{proof}
	The statement follows from \cref{prop:WRT_rep_Bernoulli,prop:infin_series_asymptotic}.
\end{proof}


\section{Examples} \label{sec:examples}


In this section, we give examples of H-graphs and an indefinite false theta function whose radial limits are WRT invariants of the Poincar\'{e} homology sphere.


\subsection{Examples of H-graphs} \label{subsec:H-graph_ex}


We give examples of weights of H-graphs with $ \det W = \pm 1 $ in \cref{tab:H-graph_ex}.
In this table, we omit weights $ (-w_1, \dots, -w_6) $ such that $ (w_1, \dots, w_6) $ is shown.

\begin{table}[htb]
	\centering
	\begin{tabular}{ccc}
		\hline\noalign{\smallskip}
		The signature of $ W $ & Definiteness of $ S $ & $ (w_1, \dots, w_6) $ \\
		\hline
		\rowcolor[gray]{0.95}
		$ (0, 6) $ & Positive & $ (-1, -3, -3, -4, -3, -4), \quad (-1, -4, -2, -5, -2, -7) $ \\
		$ (1, 5) $ & Positive & $ (-1, -2, -3, 5, -2, -3), \quad (-1, -3, -2, -7, -2, 3) $ \\
		\rowcolor[gray]{0.95}
		$ (1, 5) $ & Indefinite & $ (1, 0, -2, -5, -3, -4), \quad (-1, -4, -2, -5, -2, -5) $ \\
		$ (2, 4) $ & Positive & $ (0, -1, -4, -5, -1, -4), \quad (0, 0, -1, -2, -2, -5), $ \\
		\rowcolor[gray]{0.95}
		$ (2, 4) $ & Indefinite & $ (-1, -2, -2, 5, -3, -4), \quad (-2, -1, -2, 7, -4, -5) $ \\
		$ (2, 4) $ & Negative & $ (-1, -1, 2, -3, -3, 5), \quad (-1, -1, 2, 3, -4, -5) $ \\
		\rowcolor[gray]{0.95}
		$ (3, 3) $ & Positive & $ (0, -1, 2, 3, 3, -8), \quad (0,-1, 1, -3, 3, 5) $ \\
		$ (3, 3) $ & Indefinite & $ (1, 1, -2, 3, -3, 2), \quad (1, 1, -2, -7, 4, 7) $ \\
		\hline\noalign{\smallskip}
	\end{tabular}		
	\caption{Examples of weights of H-graphs with various signatures of $ W $ and $ S $.}
	\label{tab:H-graph_ex}
\end{table}

\begin{rem}
	If $ W $ is positive definite (resp. negative definite), then $ S $ is positive definite (resp. negative definite). 
	The converse is not true, and indefiniteness of $ W $ (resp. $ S $) does not imply indefiniteness of $ S $ (resp. $ W $) by \cref{tab:H-graph_ex}.
	We can not determine the signatures of $ W $ and $ S $ by the signatures of $ w_1, \dots, w_6 $ by \cref{tab:H-graph_ex}.
	We can not prove or disprove whether the signatures of $ W $ and $ S $ determine the signatures of $ w_1, \dots, w_6 $.
\end{rem}


\subsection{The Poincar\'{e} homology sphere} \label{subsec:Poincare}


The Poincar\'{e} homology sphere is obtained by the graph shown in \cref{fig:Poincare}. 

\begin{figure}[htb]
	\centering
	\begin{tikzpicture}
		\node[shape=circle,fill=black, scale = 0.4] (1) at (0,0) { };
		\node[shape=circle,fill=black, scale = 0.4] (2) at (1.4,0) { };
		\node[shape=circle,fill=black, scale = 0.4] (3) at (-1,-1) { };
		\node[shape=circle,fill=black, scale = 0.4] (4) at (-1,1) { };
		
		\node[draw=none] (B1) at (0,0.4) {$ 1 $};
		\node[draw=none] (B2) at (1.4, 0.4) {$ 5 $};
		\node[draw=none] (B3) at (-0.6,1) {$ 2 $};
		\node[draw=none] (B4) at (-0.6,-1) {$ 3 $};
		
		\path [-](1) edge node[left] {} (2);
		\path [-](1) edge node[left] {} (3);
		\path [-](1) edge node[left] {} (4);
	\end{tikzpicture}
	\caption{The graph involving the Poincar\'{e} homology sphere} \label{fig:Poincare}
\end{figure}
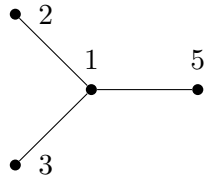

We recall previous works for discovering $ q $-series whose radial limits are WRT invariants of the Poincar\'{e} homology sphere.
The first is the false theta function discovered by Lawrence--Zagier~\cite[Theorem 1]{LZ}.
This is the same as the homological block defined by Gukov--Pei--Putrov--Vafa~\cite[Subsection 3.4]{GPPV}.
Lawrence--Zagier~\cite[Section 5]{LZ} also showed the representation of WRT invariants of the Poincar\'{e} homology sphere by Ramanujan's third order mock theta function 
\[
f(q) \coloneqq \sum_{n=0}^{\infty} \frac{q^{n^2}}{(1+q) (1+q^3) \cdots (1+q^{2n-1})}.
\]
Hikami gave the representation by Ramanujan's fifth order mock theta function 
\[
\chi_0 (q) \coloneqq \sum_{n=0}^\infty \frac{q^n}{(1-q^{n+1}) (1-q^{n+2}) \cdots (1-q^{2n})}
\]
in \cite[Proposition 1 and 2]{Hikami_mock_false} and a positive definite false theta function
\[
M_1(q) = 
\frac{1}{(q)_\infty}
\left( \sum_{m, n \ge 0} - \sum_{m, n < 0} \right)
(-1)^{m+n} q^{(3m^2 + 4mn + 3n^2 + m + n)/2}.
\]
in  \cite[Theorem 3.12]{Hikami_Hecke}.

In the rest of paper, we consider an H-graph involving the Poincar\'{e} homology sphere and prove that our indefinite false theta funtion of such an H-graph coincides with the homological block defined by Gukov--Pei--Putrov--Vafa~\cite[Subsection 3.4]{GPPV} of the Poincar\'{e} homology sphere.
We change the graph shown in \cref{fig:Poincare} to a graph shown in \cref{fig:Poincare_change} by Neumann moves (\cref{fig:Neumann}). 

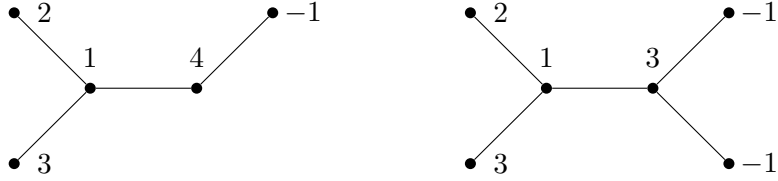
\begin{figure}[htb]
	\centering
	\begin{tikzpicture}
		\node[shape=circle,fill=black, scale = 0.4] (1) at (0,0) { };
		\node[shape=circle,fill=black, scale = 0.4] (2) at (1.4,0) { };
		\node[shape=circle,fill=black, scale = 0.4] (3) at (-1,-1) { };
		\node[shape=circle,fill=black, scale = 0.4] (4) at (-1,1) { };
		\node[shape=circle,fill=black, scale = 0.4] (5) at (2.4,1) { };
		
		\node[draw=none] (B1) at (0,0.4) {$ 1 $};
		\node[draw=none] (B2) at (1.4, 0.4) {$ 4 $};
		\node[draw=none] (B3) at (-0.6,1) {$ 2 $};
		\node[draw=none] (B4) at (-0.6,-1) {$ 3 $};
		\node[draw=none] (B5) at (2.8,1) {$ -1 $};
		
		\path [-](1) edge node[left] {} (2);
		\path [-](1) edge node[left] {} (3);
		\path [-](1) edge node[left] {} (4);
		\path [-](2) edge node[left] {} (5);
		
		\node[shape=circle,fill=black, scale = 0.4] (1) at (6,0) { };
		\node[shape=circle,fill=black, scale = 0.4] (2) at (7.4,0) { };
		\node[shape=circle,fill=black, scale = 0.4] (3) at (5,-1) { };
		\node[shape=circle,fill=black, scale = 0.4] (4) at (5,1) { };
		\node[shape=circle,fill=black, scale = 0.4] (5) at (8.4,1) { };
		\node[shape=circle,fill=black, scale = 0.4] (6) at (8.4,-1) { };
		
		\node[draw=none] (B1) at (6,0.4) {$ 1 $};
		\node[draw=none] (B2) at (7.4, 0.4) {$ 3 $};
		\node[draw=none] (B3) at (5.4,1) {$ 2 $};
		\node[draw=none] (B4) at (5.4,-1) {$ 3 $};
		\node[draw=none] (B5) at (8.8,1) {$ -1 $};
		\node[draw=none] (B6) at (8.8,-1) {$ -1 $};
		
		\path [-](1) edge node[left] {} (2);
		\path [-](1) edge node[left] {} (3);
		\path [-](1) edge node[left] {} (4);
		\path [-](2) edge node[left] {} (5);
		\path [-](2) edge node[left] {} (6);
	\end{tikzpicture}
	\caption{A new graph involving the Poincar\'{e} homology sphere} \label{fig:Poincare_change}
\end{figure}

Then, we can calculate our indefinite false theta function of this H-graph as follows.

\begin{prop} \label{prop:HB_Poincare}
	For the H-graph $ \Gamma $ shown in \cref{fig:Poincare_change}, our indefinite false theta function $ \widehat{Z}_{\Gamma} ( q ) $ has the form
	\[
	\widehat{Z}_{\Gamma} (q)
	=
	q^{-3/2}\sum_{n=-1}^{\infty} \chi_{60}(n) q^{(n^2 - 1)/120},
	\]
	where 
	\[
	\chi_{60}(n) =
	\begin{dcases}
		e_1 e_2 e_3 & \text{ if } \frac{n}{30} \equiv 1 - \frac{e_1}{2} - \frac{e_2}{3} - \frac{e_3}{5} \bmod 2\Z \text{ for some } e_1, e_2, e_3 \in \{ \pm 1 \}, \\
		0 & \text{ otherwise}.
	\end{dcases}
	\]
	Moreover, this $ q $-series coincides with the homological block of the Poincar\'{e} homology sphere.
\end{prop}

\begin{proof}
	For this $ \Gamma $, data prepared in \cref{sec:fund_data} is calculated as
	\begin{align}
		&M = 6, \quad
		N = 1, \quad
		a = -5, \quad
		c = -1, \quad
		Q'(m, n) = 30m^2 + 12mn + n^2, \\
		&\sigma_W = 0, \quad
		\sigma_S = 0, \quad
		\sigma'_W = 0, \quad
		\frac{1}{4} \left( \sum_{i=1}^{6}w_{i} + \sum_{i=3}^{6} 1/w_{i} \right)
		= \frac{3}{2} - \frac{1}{24},
		\\
		&\calT = \left\{ \frac{1}{2} \pm \frac{1}{4} \pm \frac{1}{6} \right\} 
		= \left\{ \frac{r}{12} \relmiddle{|} 0 \le r < 12, \, \gcd(r, 12) = 1 \right\}, \quad
		\calU = \left\{ \frac{1}{2} \pm \frac{1}{2} \pm \frac{1}{2} \right\} 
		= \left\{ \pm \frac{1}{2}, \frac{3}{2}  \right\},
		\\
		&\chi \left( \frac{r}{12} \right)
		= \chi_{12} (r)
		\coloneqq
		\begin{dcases}
			ee' & \text{ if } \frac{r}{6} \equiv 1 + \frac{e}{2} + \frac{e'}{3} \bmod 2\Z \text{ for some } e, e' \in \{ \pm 1 \}, \\
			0 & \text{ otherwise},
		\end{dcases}
		\\
		&\psi \left( -\frac{1}{2} \right)
		= \psi \left( \frac{3}{2} \right)
		= 1, \quad
		\psi \left( \frac{1}{2} \right)
		= -2.
	\end{align}
	Thus, we have
	\begin{align}
		\widehat{Z}_{\Gamma} (q)
		&=
		q^{-3/2 + 1/24} 
		\sum_{0 \le r < 12} \chi_{12} (r) 
		\left( \sum_{s \in \{ -1/2, 3/2 \}} - 2\sum_{s = 1/2} \right)
		\sum_{m, n = 0}^\infty q^{Q'(m+r/12, n+s)} \\
		&=
		q^{-3/2 + 1/24} 
		\sum_{r = 0}^\infty \chi_{12} (r) 
		\left( \sum_{s \in \{ -1/2, 3/2 \}} - 2\sum_{s = 1/2} \right)
		\sum_{n = 0}^\infty q^{Q'(r/12, n+s)}.
	\end{align}
	Here, for any function $ f \colon 1/2 + \Z \to \bbC $, we have
	\begin{align}
		\left( \sum_{s \in \{ -1/2, 3/2 \}} - 2\sum_{s = 1/2} \right)
		\sum_{n = 0}^\infty f \left( n+s \right)
		&=
		\sum_{n = 0}^\infty
		\left( 
		\left( f \left( n + \frac{3}{2} \right) - f \left( n + \frac{1}{2} \right) \right)
		- \left( f \left( n + \frac{1}{2} \right) - f \left( n - \frac{1}{2} \right) \right)
		\right)
		\\
		&=
		-\sum_{e \in \{ \pm 1 \}} f \left(\frac{e}{2} \right).
	\end{align}
	Thus, we have
	\[
	\widehat{Z}_{\Gamma} (q)
	=
	-q^{-3/2 + 1/24} 
	\sum_{r = 0}^\infty 
	\sum_{e \in \{ \pm 1 \}} \chi_{12} (r) e q^{Q'(r/12, e/2)}.
	\]
	Since $ Q'(m, n) = 30(m+n/5)^2 - n^2/5 $, we have
	\begin{align}
		\widehat{Z}_{\Gamma} (q)
		&=
		-q^{-3/2 + 1/24} 
		\sum_{r = 0}^\infty 
		\sum_{e \in \{ \pm 1 \}} \chi_{12} (r) e q^{30(r/12 + e/10)^2 - 1/20}
		\\
		&=
		-q^{-3/2} 
		\sum_{r = 0}^\infty 
		\sum_{e \in \{ \pm 1 \}} \chi_{12} (r) e q^{((5r + 6e)^2 - 1)/120}
		\\
		&=
		q^{-3/2} 
		\sum_{n = -1}^\infty \chi_{60} (n) q^{(n^2 - 1)/120}.
	\end{align}
	
	Next, we calculate the homological block of the Poincar\'{e} homology sphere.
	We start with the Y-graph shown in \cref{fig:Poincare}.
	Its linking matrix
	\[
	B \coloneqq 
	\pmat{
		1 & 1 & 1 & 1 \\
		1 & 2 & 0 & 0 \\
		1 & 0 & 3 & 0 \\
		1 & 0 & 0 & 5
	}
	\]
	is not negative definite, but $ (1, 1) $-component of $ B^{-1} $ is negative.
	Thus, the Y-graph is weakly negative definite (\cite[Definition 4.3]{GM}).
	For weakly negative definite plumbing graph, Gukov--Manolescu~\cite[Equation (41)]{GM} gave the definition of the homological block.
	The homological block of the Y-graph has the form
	\[
	(-1)^{\pi_B} q^{(3 \sigma_B - \tr B)/4}
	\sum_{\ell \in (1, 1, 1, 1) + 2\Z^4} F_\ell q^{-\transpose{\ell} B^{-1} \ell/4},
	\]
	where $ \pi_B $ is the number of positive eigenvalues of $ B $, $ \sigma_B $ is the signature of $ B $, $ F_\ell = F_{\ell_0} F_{\ell_1} F_{\ell_2} F_{\ell_3} $ for $ \ell = (\ell_0, \ell_1, \ell_2, \ell_3) $ and
	\[
	F_{\ell_0} \coloneqq
	\frac{1}{2} \sgn(\ell_0), \quad
	F_{\ell_i} \coloneqq
	\begin{cases}
		-\ell_i & \text{ if } \ell_i \in \{ \pm 1 \}, \\
		0 & \text{ otherwise} 
	\end{cases}
	\]
	for $ 1 \le i \le 3 $.
	Here, we have $ \pi_B = 3, \sigma_B = 2 $ and 
	\[
	-\transpose{\ell} B^{-1} \ell
	=
	30 \left( \ell_0 - \frac{\ell_1}{2} - \frac{\ell_2}{3} - \frac{\ell_3}{5} \right)^2
	- \frac{\ell_1^2}{2} - \frac{\ell_2^2}{3} - \frac{\ell_3^2}{5}.
	\]
	Thus, the homological block of the Y-graph equals to
	\[
	-q^{-5/4} 
	\sum_{\ell = (\ell_0, \ell_1, \ell_2, \ell_3) \in (1 + 2\Z) \times \{ \pm 1 \}^3}
	(-\ell_1 \ell_2 \ell_3) \frac{1}{2} \sgn(\ell_0) q^{\left( 30(\ell_0 - \ell_1/2 - \ell_2/3 - \ell_3/5)^2 - 1 - 1/30 \right)/4}.
	\]
	Since the summand is invariant under $ \ell \to -\ell $, this equals to
	\[
	q^{-3/2} 
	\sum_{e_1, e_2, e_3 \in \{ \pm 1 \}}
	\sum_{m \in \Z}
	e_1 e_2 e_3 q^{15(2m + 1 - e_1/2 - e_2/3 - e_3/5)^2/2 - 1/120}.
	\]
	This is the same $ q $-series as our $ \widehat{Z}_{\Gamma} (q) $.
\end{proof}


\bibliographystyle{alpha}
\bibliography{H-graph_indefinite}

\begin{thebibliography}{BKMN21}

\bibitem[ADH88]{Andrews-Dyson-Hickerson}
G.~E. Andrews, F.~J. Dyson, and D.~Hickerson.
\newblock Partitions and indefinite quadratic forms.
\newblock {\em Invent. Math.}, 91(3):391--407, 1988.

\bibitem[AH06]{Andersen-Hansen}
J.~E. Andersen and S.~K. Hansen.
\newblock Asymptotics of the quantum invariants for surgeries on the figure 8
  knot.
\newblock {\em Journal of Knot theory and its Ramifications}, 15(04):479--548,
  2006.

\bibitem[AH12]{Andersen-Himpel}
J.~E. Andersen and B.~Himpel.
\newblock The {W}itten--{R}eshetikhin--{T}uraev invariants of finite order
  mapping tori {II}.
\newblock {\em Quantum Topol.}, 3(3-4):377--421, 2012.

\bibitem[AM22]{Andersen-Mistegard}
J.~E. Andersen and W.~Misteg{\aa}rd.
\newblock Resurgence analysis of quantum invariants of {S}eifert fibered
  homology spheres.
\newblock {\em Journal of the London Mathematical Society}, 105(2):709--764,
  2022.

\bibitem[And13]{Andersen}
J.~E. Andersen.
\newblock The {W}itten--{R}eshetikhin--{T}uraev invariants of finite order
  mapping tori {I}.
\newblock {\em J. Reine Angew. Math.}, 681:1--38, 2013.

\bibitem[AP19]{Andersen-Petersen}
J.~E. Andersen and W.~Petersen.
\newblock Asymptotic expansions of the {W}itten--{R}eshetikhin--{T}uraev
  invariants of mapping tori {I}.
\newblock {\em Transactions of the American Mathematical Society},
  372(8):5713--5745, 2019.

\bibitem[BFOR17]{BFOR}
K.~Bringmann, A.~Folsom, K.~Ono, and L.~Rolen.
\newblock {\em Harmonic {M}aass forms and mock modular forms: theory and
  applications}, volume~64 of {\em American Mathematical Society Colloquium
  Publications}.
\newblock American Mathematical Society, Providence, RI, 2017.

\bibitem[BKM19]{BKM}
K.~Bringmann, J.~Kaszian, and A.~Milas.
\newblock Higher depth quantum modular forms, multiple {E}ichler integrals, and
  {$\mathfrak{sl}_3$} false theta functions.
\newblock {\em Res. Math. Sci.}, 6(2):Paper No. 20, 41, 2019.

\bibitem[BKMN21]{BKMN_False_modular}
K.~Bringmann, J.~Kaszian, A.~Milas, and C.~Nazaroglu.
\newblock Higher depth false modular forms.
\newblock {\em arXiv preprint arXiv:2109.00394}, 2021.

\bibitem[BMM20]{BMM_high_depth}
K.~Bringmann, K.~Mahlburg, and A.~Milas.
\newblock Higher depth quantum modular forms and plumbed 3-manifolds.
\newblock {\em Lett. Math. Phys.}, 110(10):2675--2702, 2020.

\bibitem[BN19]{Bringmann-Nazaroglu}
K.~Bringmann and C.~Nazaroglu.
\newblock A framework for modular properties of false theta functions.
\newblock {\em Res. Math. Sci.}, 6(3):Paper No. 30, 23, 2019.

\bibitem[BN22]{Bringmann-Nazaroglu_real_quad_double_sum}
K.~Bringmann and C.~Nazaroglu.
\newblock Quantum modular forms from real quadratic double sums.
\newblock {\em arXiv preprint arXiv:2205.02643}, 2022.

\bibitem[BW05]{Beasley-Witten}
C.~Beasley and E.~Witten.
\newblock Non-abelian localization for {C}hern-{S}imons theory.
\newblock {\em J. Differential Geom.}, 70(2):183--323, 2005.

\bibitem[Cha16]{Charles}
L.~Charles.
\newblock On the {W}itten asymptotic conjecture for {S}eifert manifolds.
\newblock {\em arXiv:1605.04124}, 2016.

\bibitem[Chu17]{Chun}
S.~Chun.
\newblock A resurgence analysis of the {$SU(2)$} {C}hern-{S}imons partition
  functions on a {B}rieskorn homology sphere {$\Sigma(2,5,7)$}.
\newblock {\em arXiv:1701.03528v1}, 2017.

\bibitem[Chu20]{Chung_Seifert}
H.-J. Chung.
\newblock {BPS} invariants for {S}eifert manifolds.
\newblock {\em Journal of High Energy Physics}, 2020(3):1--67, 2020.

\bibitem[Chu21a]{Chung_rational}
H.-J. Chung.
\newblock {BPS} invariants for 3-manifolds at rational level k.
\newblock {\em Journal of High Energy Physics}, 2021(2):1--23, 2021.

\bibitem[Chu21b]{Chung_resurgent}
H.-J. Chung.
\newblock Resurgent analysis for some 3-manifold invariants.
\newblock {\em Journal of High Energy Physics}, 2021(5):1--40, 2021.

\bibitem[Chu22]{Chung_SU(N)}
H.-J. Chung.
\newblock {BPS} invariants for a knot in {S}eifert manifolds.
\newblock {\em Journal of High Energy Physics}, 2022(12):1--24, 2022.

\bibitem[CM15a]{Charles-Marche_I}
L.~Charles and J.~March\'{e}.
\newblock Knot state asymptotics {I}: {AJ} conjecture and {A}belian
  representations.
\newblock {\em Publ. Math. Inst. Hautes \'{E}tudes Sci.}, 121:279--322, 2015.

\bibitem[CM15b]{Charles-Marche_II}
L.~Charles and J.~March\'{e}.
\newblock Knot state asymptotics {II}: {W}itten conjecture and irreducible
  representations.
\newblock {\em Publ. Math. Inst. Hautes \'{E}tudes Sci.}, 121:323--361, 2015.

\bibitem[DT07]{DT}
F.~Deloup and V.~Turaev.
\newblock On reciprocity.
\newblock {\em J. Pure Appl. Algebra}, 208(1):153--158, 2007.

\bibitem[FG91]{Freed-Gompf}
D.~S. Freed and R.~E. Gompf.
\newblock Computer calculation of {W}itten's {$3$}-manifold invariant.
\newblock {\em Comm. Math. Phys.}, 141(1):79--117, 1991.

\bibitem[FIMT21]{FIMT}
H.~Fuji, K.~Iwaki, H.~Murakami, and Y.~Terashima.
\newblock Witten–{R}eshetikhin–{T}uraev function for a knot in {S}eifert
  manifolds.
\newblock {\em Communications in Mathematical Physics}, 2021.

\bibitem[GM21]{GM}
S.~Gukov and C.~Manolescu.
\newblock A two-variable series for knot complements.
\newblock {\em Quantum Topology}, 12(1), 2021.
\newblock arXiv:1904.06057.

\bibitem[GMnP16]{GMP}
S.~Gukov, M.~Mari\~{n}o, and P.~Putrov.
\newblock {R}esurgence in complex {C}hern-{S}imons theory.
\newblock 2016.
\newblock arXiv:1605.07615v2.

\bibitem[GPPV20]{GPPV}
S.~Gukov, D.~Pei, P.~Putrov, and C.~Vafa.
\newblock B{PS} spectra and 3-manifold invariants.
\newblock {\em J. Knot Theory Ramifications}, 29(2):2040003, 85, 2020.

\bibitem[GZ21]{Garoufalidis-Zagier}
S.~Garoufalidis and D.~Zagier.
\newblock Knots, perturbative series and quantum modularity.
\newblock {\em arXiv preprint arXiv:2111.06645}, 2021.

\bibitem[Hik05a]{Hikami_mock_false}
K.~Hikami.
\newblock Mock (false) theta functions as quantum invariants.
\newblock {\em Regul. Chaotic Dyn.}, 10(4):509--530, 2005.

\bibitem[Hik05b]{H_Bries}
K.~Hikami.
\newblock On the quantum invariant for the {B}rieskorn homology spheres.
\newblock {\em Internat. J. Math.}, 16(6):661--685, 2005.

\bibitem[Hik05c]{H_Lattice}
K.~Hikami.
\newblock Quantum invariant, modular form, and lattice points.
\newblock {\em Int. Math. Res. Not.}, (3):121--154, 2005.

\bibitem[Hik06a]{H_Seifert}
K.~Hikami.
\newblock On the quantum invariants for the spherical {S}eifert manifolds.
\newblock {\em Comm. Math. Phys.}, 268(2):285--319, 2006.

\bibitem[Hik06b]{Hikami_AGid}
K.~Hikami.
\newblock {$q$}-series and {$L$}-functions related to half-derivatives of the
  {A}ndrews--{G}ordon identity.
\newblock {\em Ramanujan J.}, 11(2):175--197, 2006.

\bibitem[Hik06c]{H_Lattice2}
K.~Hikami.
\newblock Quantum invariants, modular forms, and lattice points. {II}.
\newblock {\em J. Math. Phys.}, 47(10):102301, 32, 2006.

\bibitem[Hik07]{Hikami_Hecke}
K.~Hikami.
\newblock Hecke type formula for unified {W}itten--{R}eshetikhin--{T}uraev
  invariants as higher-order mock theta functions.
\newblock {\em Int. Math. Res. Not. IMRN}, (7):Art. ID rnm 022, 32, 2007.

\bibitem[HK03]{Hikami-Kirillov_torus}
K.~Hikami and A.~N. Kirillov.
\newblock Torus knot and minimal model.
\newblock {\em Physics Letters B}, 575(3-4):343--348, 2003.

\bibitem[HK06]{Hikami-Kirillov_L-function}
Kazuhiro Hikami and A.~N. Kirillov.
\newblock Hypergeometric generating function of {$L$}-function, {S}later's
  identities, and quantum invariant.
\newblock {\em St. Petersburg Mathematical Journal}, 17(1):143--156, 2006.

\bibitem[HL15]{Hikami-Lovejoy_QMF}
K.~Hikami and J.~Lovejoy.
\newblock Torus knots and quantum modular forms.
\newblock {\em Research in the Mathematical Sciences}, 2(1):1--15, 2015.

\bibitem[HT04]{Hansen-Takata}
S.~K. Hansen and T.~Takata.
\newblock Reshetikhin--{T}uraev invariants of {S}eifert 3-manifolds for
  classical simple {L}ie algebras.
\newblock {\em Journal of Knot Theory and Its Ramifications}, 13(05):617--668,
  2004.

\bibitem[Jef92]{Jeffrey}
L.~C. Jeffrey.
\newblock Chern-{S}imons-{W}itten invariants of lens spaces and torus bundles,
  and the semiclassical approximation.
\newblock {\em Comm. Math. Phys.}, 147(3):563--604, 1992.

\bibitem[LO15]{Lovejoy-Osburn}
J.~Lovejoy and R.~Osburn.
\newblock Real quadratic double sums.
\newblock {\em Indag. Math. (N.S.)}, 26(4):697--712, 2015.

\bibitem[LZ99]{LZ}
R.~Lawrence and D.~Zagier.
\newblock Modular forms and quantum invariants of {$3$}-manifolds.
\newblock volume~3, pages 93--107. 1999.
\newblock Sir Michael Atiyah: a great mathematician of the twentieth century.

\bibitem[MM22]{MM}
A.~Mori and Y.~Murakami.
\newblock Witten--{R}eshetikhin--{T}uraev invariants, homological blocks, and
  quantum modular forms for unimodular plumbing {H}-graphs.
\newblock {\em SIGMA. Symmetry, Integrability and Geometry: Methods and
  Applications}, 18:034, 2022.

\bibitem[MT21]{Matsusaka-Terashima}
T.~Matsusaka and Y.~Terashima.
\newblock Modular transformations of homological blocks for {S}eifert fibered
  homology $3$-spheres.
\newblock {\em arXiv:2112.06210}, 2021.

\bibitem[Mur22]{M_plumbed}
Y.~Murakami.
\newblock Witten--{R}eshetikhin--{T}uraev invariants and homological blocks for
  plumbed homology spheres.
\newblock 2022.
\newblock arXiv:2205.01282.

\bibitem[Mur23]{M_GPPV}
Y.~Murakami.
\newblock A proof of a conjecture of {G}ukov--{P}ei--{P}utrov--{V}afa.
\newblock 2023.
\newblock arXiv:2302.13526.

\bibitem[Neu80]{Neumann_Lecture}
W.~D. Neumann.
\newblock An invariant of plumbed homology spheres.
\newblock In {\em Topology {S}ymposium, {S}iegen 1979 ({P}roc. {S}ympos.,
  {U}niv. {S}iegen, {S}iegen, 1979)}, volume 788 of {\em Lecture Notes in
  Math.}, pages 125--144. Springer, Berlin, 1980.

\bibitem[Neu81]{Neumann_work}
W.~D. Neumann.
\newblock A calculus for plumbing applied to the topology of complex surface
  singularities and degenerating complex curves.
\newblock {\em Trans. Amer. Math. Soc.}, 268(2):299--344, 1981.

\bibitem[Roz94]{Rozansky1}
L.~Rozansky.
\newblock A large {$k$} asymptotics of {W}itten's invariant of {S}eifert
  manifolds.
\newblock In {\em Proceedings of the {C}onference on {Q}uantum {T}opology
  ({M}anhattan, {KS}, 1993)}, pages 307--354. World Sci. Publ., River Edge, NJ,
  1994.

\bibitem[Roz96]{Rozansky2}
L.~Rozansky.
\newblock Residue formulas for the large {$k$} asymptotics of {W}itten's
  invariants of {S}eifert manifolds. {T}he case of {${\rm SU}(2)$}.
\newblock {\em Comm. Math. Phys.}, 178(1):27--60, 1996.

\bibitem[RT91]{Reshetikhin-Turaev}
N.~Reshetikhin and V.~G. Turaev.
\newblock Invariants of {$3$}-manifolds via link polynomials and quantum
  groups.
\newblock {\em Invent. Math.}, 103(3):547--597, 1991.

\bibitem[Wit89]{Witten}
E.~Witten.
\newblock Quantum field theory and the {J}ones polynomial.
\newblock {\em Comm. Math. Phys.}, 121(3):351--399, 1989.

\bibitem[Wu21]{Wu}
D.~H. Wu.
\newblock Resurgent analysis of {$\rm SU(2)$} {C}hern-{S}imons partition
  function on {B}rieskorn spheres {$\Sigma(2, 3, 6n + 5)$}.
\newblock {\em J. High Energy Phys.}, (2):Paper No. 008, 18, 2021.

\bibitem[Zag06]{Zagier_asymptotic}
D.~Zagier.
\newblock {\em The Mellin transform and other useful analytic techniques},
  pages 305--323.
\newblock Springer, Berlin, 2006.

\bibitem[Zag10]{Zagier_quantum}
D.~Zagier.
\newblock Quantum modular forms.
\newblock In {\em Quanta of maths}, volume~11 of {\em Clay Math. Proc.}, pages
  659--675. Amer. Math. Soc., Providence, RI, 2010.

\bibitem[Zwe02]{Zwegers_thesis}
S.~P. Zwegers.
\newblock {\em Mock theta functions}.
\newblock PhD thesis, Universiteit Utrecht, 2002.

\end{thebibliography}

\end{document}